\theoremstyle{plain}
\newtheorem{theorem}{Theorem}[section]
\newtheorem{lemma}[theorem]{Lemma}
\newtheorem{corollary}[theorem]{Corollary}
\theoremstyle{definition}
\newtheorem{example}{Example}[section]
\newtheorem{definition}{Definition}[section]
\DeclarePairedDelimiter\ceil{\lceil}{\rceil}
\DeclarePairedDelimiter\floor{\lfloor}{\rfloor}
\def\BibTeX{{\rm B\kern-.05em{\sc i\kern-.025em b}\kern-.08em
    T\kern-.1667em\lower.7ex\hbox{E}\kereln-.125emX}}
\title{An upper bound on generalized cospectral mates of oriented graphs using skew-walk matrices}
\author{\small Muhammad Raza$^{{\rm a}}$\thanks{Corresponding author: mraza@itu.edu.pk}\quad\quad Obaid Ullah Ahmad$^{\rm b}$\quad\quad Mudassir Shabbir$^{\rm a}$\quad\quad Xenofon Koutsoukos$^{{\rm c}}$ \\
\small Waseem Abbas$^{{\rm b}}$
\\
{\footnotesize$^{\rm a}$Department of Computer Science, Lahore University of Management Sciences, Lahore, Pakistan}\\
{\footnotesize$^{\rm b}$Department of Systems Engineering, The University of Texas at Dallas, Richardson, TX}\\
{\footnotesize$^{\rm c}$Department of Computer Science, Vanderbilt University, Nashville, TN
}
}
\date{}
\begin{document}

\maketitle

\begin{abstract}
Let $D$ be an oriented graph with skew-adjacency matrix $S(D)$. Two oriented graphs $D$ and $C$ are said to share the same generalized skew spectrum if $S(D)$ and $S(C)$ have the same eigenvalues, and $J-S(D)$ and $J-S(C)$ also have the same eigenvalues, where $J$ is the all-ones matrix. Such graphs that are not isomorphic are generalized cospectral mates. We derive tight upper bounds on the number of generalized cospectral mates that an oriented graph can admit, based on arithmetic criteria involving the determinant of its skew-walk matrix. As a special case, we also provide a criterion for an oriented graph to be weakly determined by its generalized skew spectrum (WDGSS), that is, its only generalized cospectral mate is its transpose.
These criteria relate directly to the controllability of graphs, a fundamental concept in the control of networked systems, thereby connecting spectral characterization of graphs to graph controllability.

\noindent\textbf{Keywords}: Oriented graph, Skew-adjacency matrix, Cospectral graph, Determined by generalized skew spectrum, Skew spectrum \\
\noindent\textbf{MSC}: 05C50
\end{abstract}


\section{Introduction}
\label{section:intro}

Let \( G = (V, E) \) be an undirected graph of order $n$. The \emph{adjacency matrix} \( A(G) \) of \( G \) is a symmetric \( n \times n \) matrix, where entry \( a_{ij} \) is $1$ if \( v_i \) and \( v_j \) are adjacent, and $0$ otherwise. 
For an undirected graph $G$ on $n$ vertices, the \emph{walk matrix} of $G$, denoted by $W(G)$, is the $n \times n$ matrix defined as $[e,\, A(G)e,\, \dots ,\, A(G)^{n-1}e]$, where $e$ is the all-ones column vector of dimension $n$. The graph $G$ is said to be \emph{controllable} if $W(G)$ is non-singular.

The \emph{adjacency spectrum} of an undirected graph $G$, denoted by $\lambda(A(G))$, is the multi-set of all eigenvalues of $A(G)$ (including multiplicities). Two graphs are said to be \emph{cospectral} if they have the same adjacency spectrum. For a real number $t$, two undirected graphs $G$ and $H$ are called \emph{$t$-cospectral} if the matrices $tJ - A(G)$ and $tJ - A(H)$ have the same spectrum, where $J$ is the matrix of all ones. We call two undirected graphs $G$ and $H$ \emph{$\mathbb{R}$-cospectral} if they are $t$-cospectral for every $t \in \mathbb{R}$. A notable result by Johnson and Newman~\cite{johnson1980note} establishes that if two undirected graphs are $t$-cospectral for two distinct values of $t$, then they are $\mathbb{R}$-cospectral.

Two graphs $G$ and $H$ are said to be \emph{isomorphic}, denoted by $G \cong H$, if there exists a bijection $f$ between their vertex sets such that any two vertices $u$ and $v$ are adjacent in $G$ if and only if $f(u)$ and $f(v)$ are adjacent in $H$. Isomorphic graphs are structurally identical despite possibly having different vertex labelings.

An undirected graph \( G \) is said to be \emph{determined by the spectrum} (henceforth, DS) if $\lambda(A(G))$ is unique for $G$ up to isomorphism, i.e., $\lambda(A(G))=\lambda(A(H))\implies  G \cong H$. A long-standing open problem in spectral graph theory is to characterize which graphs possess this property. It is widely conjectured that \emph{almost all} undirected graphs are DS \cite{van2003graphs}. Nevertheless, determining whether a specific graph is DS remains a difficult problem. For more background, see the survey papers \cite{van2003graphs, van2009developments}. A commonly used approach to address this and related problems involves the notion of the {\em generalized spectrum}. Two undirected graphs \( G \) and \( H \) are said to share the same generalized spectrum if they are cospectral and their complements are also cospectral. 

A pair of non-isomorphic graphs sharing the same generalized spectrum are called \emph{generalized cospectral mates} of each other. A graph \( G \) is said to be \emph{determined by the generalized spectrum} (DGS, henceforth) if it has no generalized cospectral mate.

The DGS framework was introduced by Wang and Xu \cite{wang2006excluding}, and a large family of undirected graphs that are DGS was identified in a subsequent work, \cite{wang2017simple}, using properties of the walk matrix \( W(G) \). More recently, Qiu et al. \cite{qiu2022new} extended these results to {\em almost} controllable undirected graphs. Later, Wang \cite{wang2023graphs} introduced a family of graphs having at most one generalized cospectral mate. All of these results leverage control-theoretic measures to study the spectral characterization of undirected graphs. For further results and background, see \cite{wang2023improved,wang2024haemers}.

The concept of spectral determination naturally extends to oriented graphs. An \emph{oriented graph} \(D\) is a directed graph (digraph) derived from an undirected graph \( G \) by assigning a direction to each edge of \( G \) based on a given orientation. The \emph{skew-adjacency matrix} of an oriented graph \( D \) is a variant of adjacency matrix that is crucial in the present context. Introduced by Tutte~\cite{tutte1947factorization},  it is the \( n \times n \) matrix \( S(D)=  (s_{ij}) \), where
\begin{equation}
    s_{ij}=
    \begin{cases}
         1 & \text{if $(v_i, v_j)$ is an arc;} \\
         -1 & \text{if $(v_j, v_i)$ is an arc;} \\
         0 & \text{otherwise.}
    \end{cases}
\end{equation}

Let \(W(D) = [e,\, S(D)e,\, \dots,\, S(D)^{n-1} e ]\) be the \emph{skew-walk matrix} of \(D\). We call the oriented graph \(D\), \emph{controllable}, if \(W(D)\) is nonsingular. The skew spectrum of \(D\), denoted by \(\lambda(S(D))\), is the multiset of all eigenvalues (with multiplicities) of its skew-adjacency matrix \(S(D)\).

We say two oriented graphs $D$ and $C$ share the same \emph{generalized skew spectrum} if $\lambda(S(D))$ equals $\lambda(S(C))$ and $\lambda(J - S(D))$ equals $\lambda(J - S(C))$, where $J$ is the all-ones matrix. By Johnson and Newman~\cite{johnson1980note}, this condition implies that \(D\) and \(C\) are \(\mathbb{R}\)-cospectral. Non-isomorphic graphs that share the same generalized skew spectrum are called \emph{generalized cospectral mates}.

Naturally, an oriented graph \(D\) is \emph{determined by the generalized skew spectrum} (or DGSS) if it has no generalized cospectral mate. For an oriented graph $D$, we define a \emph{transpose graph}, $D^{\mathrm{T}}$, by reversing the direction of each edge in $D$, i.e., $(u,v)$ is an edge in $D^{\mathrm{T}}$ if and only if $(v,u)$ is an edge in $D$. An oriented graph \( D \) is said to be \emph{self-transpose} if it is isomorphic to its transpose, \( D^{\mathrm{T}} \).

Recently, Qiu et al.~\cite{qiu2021oriented} extended spectral characterization methods to oriented graphs and introduced a family of self-transpose oriented graphs that are DGSS. An alternative proof for their work was given by Li et al.~\cite{li2023smith}. Later, Chao et al.~\cite{chao2025} extended this work to a larger family of oriented graphs. Despite these advances, work on the spectral characterization of oriented graphs remains limited and focused mostly on self-transpose oriented graphs.

Building on these studies, note that an oriented graph \(D\) and its transpose \(D^{\mathrm{T}}\) always share the same generalized skew spectrum. Consequently, if \(D\) is not self-transpose, it cannot be DGSS since \(D^{\mathrm{T}}\) will act as its generalized cospectral mate. To broaden the scope of spectral characterization, we therefore define an oriented graph \(D\) to be \emph{weakly determined by the generalized skew spectrum} (WDGSS) if its only generalized cospectral mate is its transpose \(D^{\mathrm{T}}\).

In this work, we move beyond prior approaches that largely focus on determining the existence of a generalized cospectral mate for an undirected or oriented graph. Instead, we find an upper bound on the number of generalized cospectral mates of an oriented graph, linking this bound to the distinct odd prime factors of the determinant of its skew-walk matrix. This result is especially significant for graphs that are WDGSS, and we provide a criterion for identifying such graphs.

Define \(\mathcal{F}_n\) as the set of oriented graphs \(D\) of order \(n\) 
for which \(2^{-\lfloor \frac{n}{2} \rfloor} \det W(D)\) (which is always an integer) is an odd 
square-free integer. Our main result is stated in the following theorem:

\begin{theorem}
    \label{theorem:main}
    Let \( D \in \mathcal{F}_n \), then \( D \) has at most \( 2^k - 1 \) generalized cospectral 
    mates, where \( k \) is the number of distinct odd prime factors 
    of \( \det {W(D)} \).
\end{theorem}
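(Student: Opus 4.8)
The plan is to recast generalized cospectrality as a statement about rational orthogonal matrices and then bound the number of admissible matrices through a prime-by-prime analysis of the skew-walk matrix. First I would record that $D\in\mathcal F_n$ forces $\det W(D)\neq 0$, so $W(D)$ is nonsingular and $D$ is controllable. For any $C$ sharing the generalized skew spectrum with $D$, the Johnson--Newman theorem gives $\mathbb R$-cospectrality, which by the standard orthogonal-similarity consequence yields an orthogonal $Q$ with $Qe=e$ and $Q^{\mathsf T}S(D)Q=S(C)$. Feeding $e$ through the powers $S(D)^{i}$ and using $Q^{\mathsf T}e=e$ gives $Q^{\mathsf T}W(D)=W(C)$, hence $Q=W(D)^{-\mathsf T}W(C)^{\mathsf T}$ is rational and uniquely determined by $C$. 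Since a controllable $D$ has trivial symmetry group inside the regular rational orthogonal matrices (if $Q^{\mathsf T}S(D)Q=S(D)$ and $Qe=e$ then $QW(D)=W(D)$, so $Q=I$), the assignment $C\mapsto Q$ is a bijection between labeled graphs cospectral with $D$ and the admissible set $\mathcal Q=\{Q:\ Q\text{ regular rational orthogonal},\ Q^{\mathsf T}S(D)Q\in\{0,\pm1\}^{n\times n}\}$, with two such $C$ isomorphic exactly when their matrices differ by right multiplication by a permutation matrix.

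Second, I would attach to each $Q\in\mathcal Q$ its \emph{level} $\ell(Q)$, the least positive integer with $\ell(Q)Q$ integral. From $\det W(D)\cdot Q=\mathrm{adj}\big(W(D)\big)^{\mathsf T}W(C)^{\mathsf T}\in\mathbb Z^{n\times n}$ one reads off $\ell(Q)\mid\det W(D)$. The skew structure now enters: since $S(D)$ is skew-symmetric, $e^{\mathsf T}S(D)^{k}e=0$ for every odd $k$, which pins the $2$-adic valuation of $\det W(D)$ at exactly $\lfloor n/2\rfloor$ for $D\in\mathcal F_n$, and a $2$-adic argument then shows $\ell(Q)$ is odd. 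Combined with the square-free hypothesis on $m:=2^{-\lfloor n/2\rfloor}\det W(D)=p_1\cdots p_k$, this forces $\ell(Q)$ to be an odd square-free divisor of $m$, i.e. $\ell(Q)=\prod_{i\in T}p_i$ for some $T\subseteq\{1,\dots,k\}$. Because right multiplication by a permutation does not change the level, $\ell$ descends to a well-defined map from isomorphism classes of cospectral graphs into the $2^{k}$ subsets of $\{p_1,\dots,p_k\}$, and the class of $D$ is the unique one of level $1$ (level $1$ makes $Q$ an integral orthogonal matrix fixing $e$, i.e. a permutation, so $C\cong D$).

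Third, and this is where the real work lies, I would prove the level map is injective on isomorphism classes. For a fixed odd prime $p\mid m$, the hypothesis $p\,\|\,\det W(D)$ makes $W(D)^{\mathsf T}\bmod p$ of rank $n-1$ with one-dimensional kernel, so the reduction modulo $p$ of any admissible $Q$ is confined to a single direction; exploiting skew-symmetry one shows that realizing $p$ in the level is an involution, so at most one class carries $p$ and the $p$-local data of two admissible matrices of equal level coincide. Concretely, if $C,C'$ are mates with $\ell(Q_C)=\ell(Q_{C'})$, then $R:=Q_C^{\mathsf T}Q_{C'}$ is the unique regular rational orthogonal matrix with $R^{\mathsf T}S(C)R=S(C')$, and since $C\in\mathcal F_n$ with the same prime set (as $\det W(C)=\pm\det W(D)$), the prime-local matching cancels all the shared primes and forces $\ell(R)=1$, whence $R$ is a permutation and $C\cong C'$. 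Injectivity then bounds the isomorphism classes by $2^{k}$; deleting the class of $D$ leaves at most $2^{k}-1$ generalized cospectral mates.

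The main obstacle is the prime-local analysis of the third step: making precise, for each odd prime $p\,\|\,\det W(D)$, that the kernel of $W(D)\bmod p$ together with skew-symmetry yields a unique, \emph{involutive} level-$p$ transformation, and that levels compose across distinct primes by symmetric difference so that equal levels cancel in $R$. This is exactly where the skew case departs from the symmetric theory, in which the same square-free hypothesis kills all nontrivial levels; here the purely imaginary spectrum of $S(D)$ permits one nontrivial solution per prime, which is the source of the $2^{k}-1$ mates. The supporting steps---the orthogonal-similarity reduction, the divisibility $\ell(Q)\mid\det W(D)$, and the oddness of the level---should be routine once the skew analogues of the usual walk-matrix identities are established.
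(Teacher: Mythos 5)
Your overall architecture coincides with the paper's: parametrize mates by regular rational orthogonal matrices $Q$ with $Q^TS(D)Q=S(C)$ and $Qe=e$ (so $Q^TW(D)=W(C)$), show that $\ell(Q)$ is an odd square-free divisor built from the odd primes in $\det W(D)$, prove that equal levels force isomorphic mates, and then count the $2^k$ admissible levels minus the trivial one. Your first two steps (the correspondence, $\ell(Q)\mid\det W(D)$ via the adjugate, oddness of the level, and the resulting $2^k$ possible levels) are sound and essentially match the paper, which routes the divisibility slightly differently through the last invariant factor $d_n(W(D))$ of the Smith normal form (Lemma~\ref{lemma:dnq}, Lemma~\ref{lemma:level-dn}), and imports oddness of $\ell(Q)$ from Lemma~\ref{lemma:2-out}.

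The gap is your third step, which is the actual content of the theorem, and which you yourself flag as the ``main obstacle'' left open. What must be proved is the paper's Lemma~\ref{lemma:p-out}: if an odd prime $p$ divides both $\ell(Q_1)$ and $\ell(Q_2)$, then $p\nmid\ell(Q_1^TQ_2)$; combined with square-freeness and oddness of $\ell(Q_1^TQ_2)$ this forces $\ell(Q_1^TQ_2)=1$, hence a permutation (Lemma~\ref{lemma:isomorphic}) and isomorphic mates (Lemma~\ref{lemma:permutation}). The paper's proof of this cancellation is a concrete mod-$p^2$ argument: writing $\bar Q_i=d_n(W)Q_i$, one shows $\bar Q_i^T\bar Q_i\equiv 0\pmod{p^2}$ and $\operatorname{rank}_p(\bar Q_i)=1$ with column space inside the one-dimensional kernel of $W^T$ over $\mathbb{F}_p$ (Lemma~\ref{lemma:rank}, Corollary~\ref{corollary:linear-basis}), then an inner-product lemma for vectors that are linearly dependent mod $p$ with norms divisible by $p^2$ (Lemma~\ref{lemma:uv}) and a case analysis give $\bar Q_1^T\bar Q_2\equiv 0\pmod{p^2}$, whence $\ell(Q_1^TQ_2)\mid d_n(W)^2/p^2$, which is prime to $p$. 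None of this appears in your proposal: you have only the rank-one observation, and ``exploiting skew-symmetry one shows that realizing $p$ in the level is an involution'' is an assertion, not an argument. Worse, your claim that ``at most one class carries $p$'' is false: in the paper's Example~1 the three mates of $D$ correspond to levels $7$, $257$, and $7\times 257$, so two distinct isomorphism classes carry the prime $7$. (Your ``symmetric difference'' composition rule for levels is in fact true, but it is a consequence of Lemma~\ref{lemma:p-out}, not a substitute for it.) As written, the proposal sets up the right framework but does not prove the theorem.
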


The rest of the paper is organized as follows. In Section \ref{section:prelim}, we present definitions and preliminary results. In Section \ref{section:proof}, we introduce theorems and lemmas leading to proof of Theorem \ref{theorem:main}. In Section \ref{section:examples}, we illustrate examples for our results. The relationship between controllability and spectral characterization of graphs is discussed in Section \ref{section:controls}, while conclusions and future directions are discussed in Section \ref{sec:conclusion}. The discussion focuses on oriented graphs only through Section \ref{section:examples}.


\section{Preliminaries}
\label{section:prelim}

In this section, we present key results from the literature that will be utilized in proving Theorem~\ref{theorem:main} in Section~\ref{section:proof}. We also outline our primary approach to establishing an upper bound on the number of cospectral mates, which is partially inspired by the methods in \cite{qiu2021oriented, wang2023graphs}. 

The following theorem offers a simple characterization of two oriented
graphs that share the same generalized skew-spectrum:


\begin{theorem}
    \label{theorem:q}
    \cite{li2023smith} Let \( D \) be a controllable oriented graph. Then there exists an oriented graph \( C \) such that \( D \) and \( C \) have the same generalized skew-spectrum if and only if there exists a unique rational orthogonal matrix \( Q \) such that:
    \begin{equation}
        \label{eq:q}
        Q^{\mathrm{T}} S(D) Q = S(C), \quad Qe = e,
    \end{equation}
    where \( S(D) \) and \( S(C) \) are the skew-adjacency matrices of \( D \) and \( C \), respectively, and \( e \) is the all-ones vector.
\end{theorem}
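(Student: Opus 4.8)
The plan is to establish both implications, treating the forward direction (existence of a generalized cospectral $C$ yielding the matrix $Q$) as the substantial one and the converse as a short verification.

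For the converse, suppose a regular rational orthogonal $Q$ satisfies $Q^TS(D)Q = S(C)$ and $Qe=e$. Orthogonal similarity preserves the spectrum, so $S(D)$ and $S(C)$ are cospectral. The regularity condition $Qe=e$ together with $Q^T=Q^{-1}$ forces $Q^Te=e$, hence $Q^TJQ = Q^Tee^TQ = (Q^Te)(Q^Te)^T = ee^T = J$. Consequently $Q^T(J-S(D))Q = J - S(C)$, so $J-S(D)$ and $J-S(C)$ are cospectral as well, and $D,C$ share the same generalized skew spectrum.

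For the forward direction I would first reduce the generalized-spectrum hypothesis to a statement about walk moments. By Johnson--Newman, the hypothesis is equivalent to $\mathbb{R}$-cospectrality: $tJ - S(D)$ and $tJ - S(C)$ are cospectral for all $t$. Using $J = ee^T$ and the matrix determinant lemma, $\det(xI - S - tJ) = \det(xI - S)\bigl(1 - t\,e^T(xI-S)^{-1}e\bigr)$, so $\mathbb{R}$-cospectrality is equivalent to $S(D)$ and $S(C)$ having the same characteristic polynomial together with $e^T(xI-S(D))^{-1}e = e^T(xI-S(C))^{-1}e$. Expanding the resolvent as a power series in $1/x$ shows the latter is equivalent to $e^TS(D)^ke = e^TS(C)^ke$ for all $k\ge 0$ (finitely many $k$ suffice by Cayley--Hamilton). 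Skew-symmetry forces $e^TS^ke=0$ for odd $k$, but this does not affect the argument.

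Next I would build $Q$ explicitly from the skew-walk matrices. The $(i,j)$ entry of $W(D)^TW(D)$ equals $(-1)^{i-1}e^TS(D)^{i+j-2}e$, and likewise for $C$, so the walk-moment equalities give $W(D)^TW(D) = W(C)^TW(C)$; taking determinants, $\det W(D)^2 = \det W(C)^2$, so $D$ controllable forces $C$ controllable and $W(C)$ invertible over $\mathbb{Q}$. Set $Q := W(D)\,W(C)^{-1}$, which is rational. The first column of each walk matrix is $e$, so $W(C)^{-1}e = \epsilon_1$ (the first standard basis vector) and hence $Qe = W(D)\epsilon_1 = e$, making $Q$ regular. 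Orthogonality then follows at once: $Q^TQ = W(C)^{-T}\bigl(W(D)^TW(D)\bigr)W(C)^{-1} = W(C)^{-T}\bigl(W(C)^TW(C)\bigr)W(C)^{-1} = I$. Since $S(D)$ and $S(C)$ are cospectral they obey the same Cayley--Hamilton recurrence, giving $S(D)W(D) = W(D)M$ and $S(C)W(C) = W(C)M$ for a common companion-type matrix $M$; this yields $S(D)Q = QS(C)$, i.e. $Q^TS(D)Q = S(C)$. Uniqueness follows because the intertwining relation with $Qe=e$ forces $S(D)^ke = QS(C)^ke$ for all $k$, i.e. $W(D) = QW(C)$, pinning down $Q = W(D)W(C)^{-1}$.

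The main obstacle is the orthogonality step: the full content of the generalized-spectrum hypothesis must be distilled, through the determinant identity and the resolvent expansion, into the moment equalities $e^TS(D)^ke=e^TS(C)^ke$, and these must then be matched against the Gram matrix $W(D)^TW(D)$ with its alternating-sign structure arising from $S^T=-S$. Handling this bookkeeping correctly, and confirming along the way that the constructed $Q$ is genuinely rational and regular, is where the care is required; the remaining algebraic manipulations are routine once the moment identities are in hand.
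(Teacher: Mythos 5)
The paper itself gives no proof of Theorem~\ref{theorem:q}; it is quoted from \cite{li2023smith}, and the only trace of the argument in the paper is Lemma~\ref{lemma:q}, which records the formula $Q = W(D)W(C)^{-1}$. Your proof is correct and is essentially the standard walk-matrix argument from that literature: reduce generalized cospectrality via the rank-one determinant identity to the moment equalities $e^TS(D)^ke = e^TS(C)^ke$, deduce $W(D)^TW(D)=W(C)^TW(C)$ (so $C$ is controllable), define $Q=W(D)W(C)^{-1}$, and check regularity, orthogonality, the intertwining $S(D)Q=QS(C)$ through the common companion matrix, and uniqueness from $W(D)=QW(C)$; the converse verification via $Q^TJQ=J$ is also right. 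I see no gaps, including the correct handling of the sign $(-1)^{i-1}$ coming from $S^T=-S$, which is the one place the skew case differs from the symmetric one.
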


Let $\Gamma(D)$ denote the set of all rational orthogonal matrices defined by
\begin{equation*}
    \Gamma(D) = \{Q \in O_n(\mathbb{Q}) \mid Q^{\mathrm{T}} S(D) Q = S(C) \text{ for oriented graph } C \text{ and } Qe = e\},
\end{equation*}
where \(O_n(\mathbb{Q})\) is the set of all orthogonal matrices with rational entries.

\begin{definition}
    Let \(Q\) be an orthogonal matrix with rational entries. The \emph{level} of \(Q\), denoted by \(\ell(Q)\) (or simply \(\ell\)), is the smallest positive integer \(x\) such that \(xQ\) is an integral matrix.
\end{definition}

Note that for any $Q \in \Gamma(D)$, we have $\ell(Q)=1$ if and only if $Q$ is a permutation matrix. Furthermore, the next lemma shows that if two matrices in $\Gamma(D)$ differ only by a permutation matrix, then their corresponding oriented graphs are isomorphic.

\begin{lemma}
\label{lemma:permutation}
Let $D \in \mathcal{F}_n$ and $Q_1, Q_2 \in \Gamma(D)$. Let $B$ and $C$ be the oriented graphs having skew-adjacency matrices:
\[
S(B)=Q_1^{\mathrm{T}} S(D) Q_1
\quad\text{and}\quad
S(C)=Q_2^{\mathrm{T}} S(D) Q_2.
\]
If $Q_2 = Q_1 P$, where $P$ is a permutation matrix, then $B$ and $C$ are isomorphic.
\end{lemma}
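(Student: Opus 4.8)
The plan is to reduce the statement to the elementary fact that, for oriented graphs, two skew-adjacency matrices that are conjugate by a permutation matrix belong to isomorphic graphs. The argument is short, so the work is mostly in setting up the right algebraic identity and then reading it off as an isomorphism.

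First I would substitute the hypothesis $Q_2 = Q_1 P$ directly into the definition of $S(C)$. Using $(Q_1 P)^T = P^T Q_1^T$, this gives
\[
S(C) = Q_2^T S(D) Q_2 = P^T \left( Q_1^T S(D) Q_1 \right) P = P^T S(B) P,
\]
so that $S(B)$ and $S(C)$ differ only by conjugation with the permutation matrix $P$.

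Next I would invoke the standard correspondence between permutation conjugation and graph isomorphism. Since $Q_1, Q_2 \in \Gamma(D)$, both $S(B)$ and $S(C)$ are genuine skew-adjacency matrices of oriented graphs (skew-symmetric, zero diagonal, entries in $\{0, \pm 1\}$). The relation $S(C) = P^T S(B) P$ states precisely that relabeling the vertices of $B$ according to the permutation encoded by $P$ produces $C$: conjugation by $P$ simultaneously permutes rows and columns and preserves the sign, hence the orientation, of each nonzero entry, so an arc $(v_i, v_j)$ of $B$ is carried to the corresponding arc of $C$ with the same direction. This is exactly the matrix-level statement that the bijection induced by $P$ is an isomorphism of oriented graphs, whence $B \cong C$.

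There is no substantial obstacle here; the only points requiring care are bookkeeping. I would confirm that the convention for how the skew-adjacency matrix transforms under vertex relabeling matches the form $P^T S P$ rather than $P S P^T$, which is immaterial since $P$ is invertible with $P^{-1} = P^T$, and I would check that $P^T S(B) P$ is again a valid skew-adjacency matrix so that the resulting object is honestly an oriented graph isomorphic to $B$. I would also note that the hypothesis $D \in \mathcal{F}_n$ is not actually used in this lemma and is retained only for consistency with the surrounding development.
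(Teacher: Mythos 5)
Your proof is correct and follows essentially the same approach as the paper: both substitute $Q_2 = Q_1 P$ and use orthogonality of $Q_1$ to show that $S(B)$ and $S(C)$ are permutation-similar, then conclude isomorphism; the paper merely routes the algebra through $S(D) = Q_2 S(C) Q_2^T$ first, whereas you substitute directly into the definition of $S(C)$, which is slightly cleaner. Your closing observations (the $P^T S P$ versus $P S P^T$ convention being immaterial, and $D \in \mathcal{F}_n$ being unused) are both accurate.
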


\begin{proof}
    We can express $S(D)$ in terms of $S(C)$ as:
    \[
    S(D)=Q_2 S(C) Q_2^{\mathrm{T}}
    \]
    Therefore, we can express $S(B)$ in terms of $S(C)$ as:
    \begin{equation}
    \label{eq:sksh}
    S(B) = Q_1^{\mathrm{T}} Q_2 S(C) Q_2^{\mathrm{T}} Q_1
    \end{equation}
    Since $Q_2 = Q_1 P$, we substitute to obtain:
    \begin{equation}
    \label{eq:skshperm}
    S(B) = Q_1^{\mathrm{T}} Q_1 P S(C) P^{\mathrm{T}} Q_1^{\mathrm{T}} Q_1 = P S(C) P^{\mathrm{T}}
    \end{equation}

Thus, Equation \eqref{eq:skshperm} shows that the skew-adjacency matrix $S(B)$ is obtained from $S(C)$ by a permutation similarity transformation. Hence, the oriented graphs $B$ and $C$ are isomorphic.
\end{proof}

Our approach is to show that if two matrices, \( Q_1 \) and \( Q_2 \), in \( \Gamma(D) \) have the same level \( \ell \), then \( Q_2 \) can be obtained from \( Q_1 \) by simply permuting its columns. By leveraging the preceding lemma, we can then characterize the sets of generalized cospectral mates and establish an upper bound on their number.

Smith Normal Form (SNF) is an important tool for working with integral matrices. An integral matrix \(V\) of order \(n\) is said to be \emph{unimodular} if \(\det{V}=\pm 1\). We denote by \( \mathbb{F}_p \) the finite field with \(p\) elements, and by \( \operatorname{rank}_p(M) \) the rank of an integral matrix \(M\) considered over \( \mathbb{F}_p \). 

\begin{theorem}
    Let \( M \) be a full-rank integral matrix. There exist unimodular matrices \( V_1 \) and \( V_2 \) such that \( M = V_1 N V_2 \), where \( N = \text{diag}(d_1, d_2, \ldots, d_n) \) is the SNF with positive integers $d_i$ such that \( d_i \mid d_{i+1} \) for \( i = 1, 2, \ldots, n-1 \).
\end{theorem}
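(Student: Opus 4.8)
The plan is to prove the statement by an explicit reduction using integer row and column operations. The key observation is that each admissible operation over $\mathbb{Z}$ --- swapping two rows or two columns, negating a row or column, or adding an integer multiple of one row (column) to another --- is realized by left- or right-multiplication by an elementary matrix that is itself unimodular. Since a product of unimodular matrices is again unimodular, it suffices to exhibit a finite sequence of such operations transforming $M$ into a diagonal matrix $N$ with the stated divisibility property; collecting the row operations into a unimodular matrix $V_1^{-1}$ and the column operations into $V_2^{-1}$ then yields $M = V_1 N V_2$.

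First I would produce the pivot $d_1$. Among the nonzero entries currently present, I select one of minimal absolute value and move it to position $(1,1)$ by row and column swaps. Dividing each remaining entry of the first row and first column by this pivot with remainder and subtracting the corresponding integer multiples (the Euclidean step) clears the rest of the first row and column; if this ever produces a nonzero remainder, that remainder has strictly smaller absolute value than the pivot, so I swap it into the pivot position and repeat. Because the pivot's absolute value is a positive integer that strictly decreases at each such repetition, this descent terminates, leaving a matrix whose first row and column vanish except for the entry $d_1$.

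The main obstacle is enforcing the divisibility chain $d_i \mid d_{i+1}$, which the clearing step alone does not guarantee. The remedy is the standard trick: if, after clearing, some interior entry $a_{ij}$ fails to be divisible by $d_1$, I add row $i$ to the first row and run the Euclidean reduction across the first row again. This manufactures in the pivot position an entry of absolute value $\gcd(d_1, a_{ij}) < |d_1|$, contradicting the minimality of $d_1$ unless $d_1$ already divides $a_{ij}$. Iterating the entire procedure --- clearing together with this interior-entry correction --- forces $|d_1|$ to strictly decrease whenever any divisibility fails, so by the same termination argument it halts in a configuration where $d_1$ divides every entry of $M$, and in particular every entry of the lower-right $(n-1)\times(n-1)$ submatrix.

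Finally I would recurse, applying the identical procedure to that submatrix to obtain $d_2$, then to the next submatrix, and so on. Since $d_1$ divides all entries of the submatrix on which $d_2$ is produced, it divides $d_2$; iterating yields the full chain $d_1 \mid d_2 \mid \cdots \mid d_n$. All operations are invertible over $\mathbb{Q}$, so they preserve rank; because $M$ has full rank, no $d_i$ vanishes, and $N = \mathrm{diag}(d_1, \dots, d_n)$ is the desired Smith Normal Form. (The statement asks only for existence, so I would not belabor uniqueness of the $d_i$, which in any case follows from their characterization via greatest common divisors of the $k \times k$ minors of $M$.)
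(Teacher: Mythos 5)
Your proof is correct: it is the standard constructive existence argument for the Smith Normal Form, reducing $M$ by unimodular elementary row and column operations, with the descent on the absolute value of the pivot guaranteeing termination and the ``add a bad row to the pivot row'' trick enforcing the divisibility chain $d_1 \mid d_2 \mid \cdots \mid d_n$. Note that the paper itself offers no proof of this statement --- it is the classical Smith Normal Form theorem, quoted as a known tool in the preliminaries --- so there is no internal argument to compare against; your write-up supplies exactly the textbook proof that the paper implicitly relies on. Two cosmetic points: after one Euclidean step the new pivot candidate is the remainder of $a_{ij}$ modulo $d_1$ rather than $\gcd(d_1,a_{ij})$ itself (the gcd only emerges after iterating, though either way the absolute value strictly drops, which is all the termination argument needs), and since the operations you allow include negating a row, you can also normalize the $d_i$ to be positive, although the paper's statement does not demand this.
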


For an integral matrix $M$, let $d_i(M)$ represent the $i^\text{th}$ invariant factor of $M$. Note that the determinant of $M$ can be expressed as:
\begin{equation}
    \det{M} = \pm \prod_{i=1}^{n} d_i(M).
\end{equation}

It is easy to observe that for a prime number $p$, if $p \mid \det{M}$ and $p^2 \nmid \det{M}$, then $p \mid d_n(M)$ and $p \nmid d_i(M)$ for all $i \ne n$. Consequently, we have $\operatorname{rank}_p(N) = n - 1$ as only $d_n(M)$ is divisible by $p$. Moreover, since $V_1$ and $V_2$ are unimodular, it follows that $\operatorname{rank}_p(M) = \operatorname{rank}_p(N) = n - 1$. The following lemma plays a significant role in building a relationship between the level of the matrix $Q$ and the invariant factor $d_n$.

\begin{lemma}
    \label{lemma:dnq}
    \cite{qiu2023smith}
    Let $X$ and $Y$ be two non-singular integral matrices such that $QX = Y$, where $Q$ is a rational orthogonal matrix. Then $\ell(Q) \mid \operatorname{gcd}(d_n(X), d_n(Y))$ (where $\text{gcd}$ denotes the greatest common divisor).
\end{lemma}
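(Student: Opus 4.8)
The plan is to prove the two divisibilities $\ell(Q)\mid d_n(X)$ and $\ell(Q)\mid d_n(Y)$ separately and then combine them. The crucial preliminary observation is an alternative characterization of the largest invariant factor: for any non-singular integral matrix $M$ of order $n$, the integer $d_n(M)$ is exactly the smallest positive integer $c$ for which $c\,M^{-1}$ is integral. To see this, I would write $M = V_1 N V_2$ with $V_1, V_2$ unimodular and $N = \operatorname{diag}(d_1,\dots,d_n)$ its SNF, so that $M^{-1} = V_2^{-1} N^{-1} V_1^{-1}$. Since $V_1^{-1}$ and $V_2^{-1}$ are integral (being unimodular), $c\,M^{-1}$ is integral if and only if $c\,N^{-1} = \operatorname{diag}(c/d_1,\dots,c/d_n)$ is integral; because $d_i \mid d_n$ for every $i$, the least such $c$ is $d_n = d_n(M)$.

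Next I would record the elementary fact that the set $\{\,m \in \mathbb{Z} : mQ \text{ is integral}\,\}$ is an ideal of $\mathbb{Z}$, hence generated by its smallest positive element, which is $\ell(Q)$ by definition. Consequently, whenever $mQ$ happens to be integral for some integer $m$, one may conclude $\ell(Q) \mid m$. This is the mechanism by which every integrality statement about a scalar multiple of $Q$ will be converted into a divisibility statement about $\ell(Q)$.

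With these tools in hand, the first divisibility is immediate. From $QX = Y$ and the non-singularity of $X$ we write $Q = Y X^{-1}$, so that $d_n(X)\,Q = Y\,\big(d_n(X)\,X^{-1}\big)$. The factor $d_n(X)\,X^{-1}$ is integral by the characterization above, and $Y$ is integral, so $d_n(X)\,Q$ is integral; the ideal argument then yields $\ell(Q) \mid d_n(X)$. For the second divisibility, orthogonality enters: since $Q$ is orthogonal we have $Q^{-1} = Q^T$, and clearly $\ell(Q^T) = \ell(Q)$ because $m Q^T$ is integral exactly when $m Q$ is. Writing $Q^{-1} = X Y^{-1}$, the identical computation gives $d_n(Y)\,Q^{-1} = X\,\big(d_n(Y)\,Y^{-1}\big)$, which is integral, whence $\ell(Q) = \ell(Q^{-1}) \mid d_n(Y)$.

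Finally, combining $\ell(Q)\mid d_n(X)$ and $\ell(Q)\mid d_n(Y)$ gives $\ell(Q) \mid \gcd(d_n(X), d_n(Y))$, as required. I do not anticipate a genuine obstacle here; the only points requiring care are the characterization of $d_n(M)$ as the least scalar clearing the denominators of $M^{-1}$, and the realization that the two sides of $QX = Y$ must be handled asymmetrically — the $Y$-side divisibility becomes accessible only after invoking $Q^{-1} = Q^T$ to pass to the representation $Q^{-1} = X Y^{-1}$.
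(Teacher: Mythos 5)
The paper does not actually prove this lemma: it is imported from \cite{qiu2023smith} as a known result, so there is no internal proof to compare against. Judged on its own, your proof is correct and complete: the Smith-normal-form argument that $d_n(M)$ is the least positive integer $c$ making $c\,M^{-1}$ integral is sound (unimodularity of $V_1,V_2$ lets you pass between $cM^{-1}$ and $cN^{-1}$, and $d_i \mid d_n$ finishes it), the ideal argument legitimately converts integrality of $mQ$ into $\ell(Q)\mid m$, and the asymmetric handling of the two sides --- using $Q = YX^{-1}$ for one divisibility and $Q^{-1}=Q^T = XY^{-1}$ with $\ell(Q^T)=\ell(Q)$ for the other --- is exactly where orthogonality is needed and is handled correctly. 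This is essentially the standard argument found in the Smith-normal-form literature the paper cites, so your reconstruction can stand in for the omitted proof as written.
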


The following two lemmas were originally proven by Qiu et al.~\cite{qiu2021oriented} for self-transpose oriented graphs in $\mathcal{F}_n$. However, the same reasoning is equally applicable to all oriented graphs that belong to $\mathcal{F}_n$.

\begin{lemma}
\label{lemma:snf-structure}
\cite{qiu2021oriented}
Let $D \in \mathcal{F}_n$, then $\operatorname{rank}_2(W(D)) = \ceil{\frac{n}{2}}$ and the SNF of $W(D)^{\mathrm{T}}$ is $N={\rm diag}(\underbrace{1,1,\ldots,1}_{\lceil\frac{n}2\rceil},\underbrace{2,2,\ldots,2,2b}_{\lfloor \frac{n}2 \rfloor})$, where b is an odd square-free integer.
\end{lemma}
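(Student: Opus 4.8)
The plan is to prove the two assertions separately: first the $2$-rank claim $\operatorname{rank}_2(W(D)) = \ceil{\frac{n}{2}}$, and then read off the precise Smith normal form from this rank together with the hypothesis that $2^{-\floor{\frac{n}{2}}}\det W(D)$ is an odd square-free integer. Throughout I write $S = S(D)$, $W = W(D)$, and set $w_k = e^T S^k e$; since a matrix and its transpose have the same invariant factors, it suffices to work with $W$ rather than $W^T$.

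The heart of the argument, and the step I expect to be the \emph{main obstacle}, is the upper bound $\operatorname{rank}_2(W) \le \ceil{\frac{n}{2}}$. The key observation is that $S$ is skew-symmetric, so $w_k = 0$ in $\mathbb{Z}$ for every odd $k$, because $e^T S^k e = e^T (S^T)^k e = (-1)^k e^T S^k e$. Reducing modulo $2$, where $S^T \equiv S$, one gets $w_{2i} = e^T (S^i)^T S^i e = (S^i e)^T (S^i e) \equiv \sum_j (S^i e)_j \equiv w_i \pmod 2$, using $a^2 \equiv a$ over $\mathbb{F}_2$. Chaining this recursion down to an odd index shows $w_k \equiv 0 \pmod 2$ for all $k \ge 1$, while $w_0 = n$. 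Hence the Gram matrix $W^T W$ over $\mathbb{F}_2$, whose $(i,j)$ entry is $w_{i+j-2}$, vanishes everywhere except possibly in its top-left corner, which equals $n \bmod 2$; its rank is therefore at most $1$. Since the rank of the Gram matrix of a spanning set equals the rank of the standard bilinear form restricted to the column space $\mathcal{C}$ of $W$, the radical $\mathcal{C} \cap \mathcal{C}^\perp$ has codimension at most $1$ in $\mathcal{C}$. A totally isotropic subspace for the nondegenerate standard form on $\mathbb{F}_2^n$ has dimension at most $\floor{\frac{n}{2}}$; treating $n$ even (where the Gram matrix is entirely zero, so $\mathcal{C}$ is itself isotropic) and $n$ odd separately then yields $\operatorname{rank}_2(W) = \dim\mathcal{C} \le \ceil{\frac{n}{2}}$ in both parities. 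Note this direction uses only skew-symmetry, so it holds for every oriented graph.

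For the matching lower bound I would invoke $D \in \mathcal{F}_n$: this makes $\det W = \pm 2^{\floor{\frac{n}{2}}} b$ with $b$ odd, so $W$ is nonsingular and the $2$-adic valuation of $\det W$ is exactly $\floor{\frac{n}{2}}$. The number of invariant factors divisible by $2$ equals $n - \operatorname{rank}_2(W)$, and each such factor contributes at least one factor of $2$ to $\det W$, forcing $n - \operatorname{rank}_2(W) \le \floor{\frac{n}{2}}$, i.e. $\operatorname{rank}_2(W) \ge \ceil{\frac{n}{2}}$. Combined with the upper bound, this gives the claimed equality.

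Finally I would extract the Smith normal form. Writing the invariant factors as $d_1 \mid \cdots \mid d_n$, the rank equality forces exactly $\floor{\frac{n}{2}}$ of them to be even, namely the last $\floor{\frac{n}{2}}$ by the divisibility chain, so $d_1, \ldots, d_{\ceil{n/2}}$ are odd. Since $v_2(\det W) = \floor{\frac{n}{2}}$ is split among $\floor{\frac{n}{2}}$ even factors each of valuation at least $1$, every even $d_i$ has $2$-adic valuation exactly $1$. For the odd parts $m_i$ (where $d_i = 2^{a_i} m_i$), the chain $m_1 \mid \cdots \mid m_n$ has product equal to the square-free integer $b$; square-freeness prevents any odd prime from occurring in two members of the chain, so $m_i = 1$ for $i < n$ and $m_n = b$. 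Together these give $d_i = 1$ for $i \le \ceil{\frac{n}{2}}$, $d_i = 2$ for $\ceil{\frac{n}{2}} < i < n$, and $d_n = 2b$ with $b$ odd and square-free, which is exactly the stated normal form.
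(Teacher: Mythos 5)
Your proof is correct. Note, however, that the paper never proves this lemma itself: it is imported from Qiu et al.~\cite{qiu2021oriented}, with only the remark that the original argument (given there for self-transpose graphs) applies to all of $\mathcal{F}_n$, so there is no in-paper proof to compare against. Your reconstruction follows the standard route of that cited source: skew-symmetry kills $e^TS^ke$ for odd $k$ exactly and, via $w_{2i}\equiv w_i \pmod 2$, kills all $w_k$ with $k\ge 1$ modulo $2$, so $W^TW \bmod 2$ has rank at most one; the totally-isotropic-subspace bound then gives $\operatorname{rank}_2(W)\le\lceil n/2\rceil$; the hypothesis $v_2(\det W)=\lfloor n/2\rfloor$ forces the reverse inequality through the count of even invariant factors; and the square-free condition pins the invariant factors down to $(1,\dots,1,2,\dots,2,2b)$. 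All the individual steps check out (in particular the identity $\operatorname{rank}(W^TW)=\dim\mathcal{C}-\dim(\mathcal{C}\cap\mathcal{C}^\perp)$ and the bound $\dim U\le \lfloor n/2\rfloor$ for totally isotropic $U$ are valid over $\mathbb{F}_2$), and your rank upper bound correctly uses only skew-symmetry, not membership in $\mathcal{F}_n$, which is precisely why the lemma extends beyond the self-transpose case as the paper asserts.
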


\begin{lemma}
    \label{lemma:2-out}
    \cite{qiu2021oriented} Let \( D \in \mathcal{F}_n \) be an oriented graph of order \( n \), and let \( Q \in \Gamma(D) \) with level $\ell(Q)$. Then \( \ell(Q) \) is odd.
\end{lemma}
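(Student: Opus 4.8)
The plan is to reduce the statement to controlling the $2$-adic part of $\ell(Q)$ and then to force a contradiction by reduction modulo $2$.

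First I would translate the hypothesis $Q\in\Gamma(D)$ into a relation between walk matrices. Since $Q^TS(D)Q=S(C)$ for some oriented graph $C$ and $Qe=e$ (hence also $Q^Te=e$), one gets $S(C)^ke=Q^TS(D)^ke$ for every $k$, and therefore
\[
W(C)=Q^TW(D),\qquad\text{equivalently}\qquad QW(C)=W(D).
\]
Because $Q$ is orthogonal, $\det W(C)=\pm\det W(D)$, so $2^{-\lfloor n/2\rfloor}\det W(C)$ is again odd and square-free; thus $C\in\mathcal{F}_n$ and Lemma~\ref{lemma:snf-structure} applies to both $W(D)$ and $W(C)$. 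Since a matrix and its transpose have the same invariant factors, $d_n(W(D))=d_n(W(C))=2b$ with $b$ odd. Applying Lemma~\ref{lemma:dnq} to $QW(C)=W(D)$ gives $\ell(Q)\mid\gcd(d_n(W(C)),d_n(W(D)))$, whose $2$-adic valuation is $1$. Hence $v_2(\ell(Q))\le 1$, and it remains only to exclude $v_2(\ell(Q))=1$.

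Then I would argue by contradiction, assuming $\ell=\ell(Q)=2\ell'$ with $\ell'$ odd, and set $R=\ell Q$, an integral matrix with $R^TR=RR^T=\ell^2I$ and $Re=\ell e$. Two facts drive the argument. On one hand, minimality of the level forbids all entries of $R$ being even (otherwise $(\ell/2)Q$ would be integral), so $\bar R:=R\bmod 2\neq 0$. On the other hand, reducing $R^TR=\ell^2I\equiv 0\pmod 2$ shows $\bar R^T\bar R=\bar R\bar R^T=0$ over $\mathbb{F}_2$, so the column and row spaces of $\bar R$ are totally isotropic and of dimension at most $\lfloor n/2\rfloor$; moreover $\operatorname{row}_2\bar R\subseteq\ker_2\bar R$. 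Reducing $RW(C)=\ell W(D)\equiv 0$ gives $\bar R\,\overline{W(C)}=0$, i.e. $\operatorname{col}_2\overline{W(C)}\subseteq\ker_2\bar R$, where $\operatorname{rank}_2\overline{W(C)}=\lceil n/2\rceil$ by Lemma~\ref{lemma:snf-structure}.

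The crux, and the step I expect to be the main obstacle, is to turn these constraints into an outright contradiction with $\bar R\neq 0$. The plain bilinear bounds above are mutually consistent (they only cap $\operatorname{rank}_2\bar R$ by $\lfloor n/2\rfloor$ without forcing it to $0$), so I expect to need the finer information carried by $R^TR=\ell^2I$ \emph{modulo $4$}: writing $\ell^2=4\ell'^2\equiv 4\pmod 8$ forces every column of $\bar R$ to have Hamming weight $\equiv 0\pmod 4$, which upgrades total isotropy to total singularity for a quadratic refinement of the dot product on $\mathbb{F}_2^n$ and should tighten the admissible dimension of $\operatorname{col}_2\bar R$. I would then play this sharper bound against the exact equality $\operatorname{rank}_2\overline{W(C)}=\lceil n/2\rceil$ together with the inclusions $\operatorname{col}_2\overline{W(C)}\subseteq\ker_2\bar R$ and $\operatorname{row}_2\bar R\subseteq\ker_2\bar R$, the aim being to show that a nonzero $\bar R$ would force the space $\operatorname{col}_2\overline{W(C)}$ to be isotropic, which the checkerboard structure of $\overline{W(C)}^{T}\overline{W(C)}$ (arising from $e^TS^ke=0$ for odd $k$) prevents. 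Making this reconciliation uniform in the parity of $n$ is the delicate point; once it is in place, $v_2(\ell(Q))=1$ is impossible and $\ell(Q)$ is odd.
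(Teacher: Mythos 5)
The first half of your argument is sound and uses the same toolkit the paper relies on elsewhere: from $Qe=e$ and $Q^TS(D)Q=S(C)$ you correctly obtain $QW(C)=W(D)$, deduce $C\in\mathcal{F}_n$, invoke Lemma~\ref{lemma:snf-structure} (with the observation that invariant factors are transpose-invariant) to get $d_n(W(D))=2b_D$ and $d_n(W(C))=2b_C$ with $b_D,b_C$ odd, and apply Lemma~\ref{lemma:dnq} to conclude $\ell(Q)\mid\gcd(2b_C,2b_D)$, hence $4\nmid\ell(Q)$. But this only caps the $2$-adic valuation at $1$. The entire content of the lemma beyond this easy bound is ruling out $\ell(Q)\equiv 2\pmod 4$, and that step is never actually carried out. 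Your own diagnosis is accurate: the mod-$2$ constraints you list ($\bar R^T\bar R=\bar R\bar R^T=0$, $\bar R\,\overline{W(C)}=0$, $\overline{W(D)}^T\bar R=0$, $\operatorname{rank}_2\overline{W(C)}=\lceil n/2\rceil$, $\bar R\neq 0$) are mutually consistent. Indeed, any $\bar R=uv^T$ with $0\neq u\in\ker_2\bigl(\overline{W(D)}^T\bigr)$ and $0\neq v\in\ker_2\bigl(\overline{W(C)}^T\bigr)$, both of even weight, satisfies every one of them, and such vectors exist once $\lfloor n/2\rfloor\ge 2$; so no contradiction can follow from these constraints alone.

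The proposed completion is a plan, not a proof, and its key leaps are unjustified. The mod-$8$ observation that each column of $\bar R$ has Hamming weight $\equiv 0\pmod 4$ is correct, but ``upgrades total isotropy to total singularity\ldots and should tighten the admissible dimension'' is not an argument; and the final aim --- that a nonzero $\bar R$ would force $\operatorname{col}_2\overline{W(C)}$ to be totally isotropic --- does not follow from the inclusion $\operatorname{col}_2\overline{W(C)}\subseteq\ker_2\bar R$, which constrains where the columns of $\overline{W(C)}$ sit relative to $\bar R$ but says nothing about the bilinear form restricted to $\operatorname{col}_2\overline{W(C)}$. Note also that the paper offers no proof of this lemma to compare against: it imports the statement from \cite{qiu2021oriented}, and it is precisely this hard step (excluding an even level) that the cited source settles by a finer integral analysis of the walk matrix --- using the full strength of $R^TR=\ell^2 I$ over $\mathbb{Z}$ together with the SNF/unimodular decomposition, not merely its reductions modulo $2$ and $4$. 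As it stands, your proposal establishes only the strictly weaker statement $4\nmid\ell(Q)$.
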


\section{Proof of Theorem \ref{theorem:main}}
\label{section:proof}

In this section, we establish key properties of rational orthogonal matrices. 
The ensuing lemmas lay the foundation for the proof of Theorem~\ref{theorem:main}. We also provide a criterion for identifying a class of WDGSS graphs as a special case.

\begin{lemma}
\label{lemma:level-mod}
Let \( Q \) be a rational matrix, and let $x$ and $k$ be positive integers such that $k \mid x$. If \( xQ \) is an integral matrix and satisfies \( xQ \equiv 0 \pmod{k} \), then \( \ell(Q) \mid \frac{x}{k} \).
\end{lemma}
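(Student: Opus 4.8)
The plan is to reduce the statement to the fundamental ``absorption'' property of the level: for a positive integer $m$, the matrix $mQ$ is integral if and only if $\ell(Q)\mid m$. The hypothesis $xQ\equiv 0\pmod{k}$ says that every entry of the integral matrix $xQ$ is divisible by $k$, so that $M:=\tfrac1k(xQ)=\tfrac{x}{k}Q$ is itself an integral matrix. The desired conclusion $\ell(Q)\mid\tfrac{x}{k}$ will then follow, but only once we know that $\tfrac{x}{k}$ is genuinely a positive integer; establishing this integrality is the crux of the argument, not a formality.

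First I would record the absorption property. Writing $\ell=\ell(Q)$, suppose $mQ$ is integral for a positive integer $m$ and divide with remainder, $m=q\ell+r$ with $0\le r<\ell$. Then $rQ=mQ-q(\ell Q)$ is a difference of integral matrices, hence integral, and minimality of $\ell$ forces $r=0$, so $\ell\mid m$; the converse is immediate since $mQ=\tfrac{m}{\ell}(\ell Q)$. Applying this with $m=x$, which is legitimate because $xQ$ is integral by hypothesis, gives $\ell\mid x$, and I write $x=\ell s$ with $s$ a positive integer.

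Next I would pin down the integer $\tfrac{x}{k}$. Put $Q':=\ell Q$, an integral matrix. In the setting of interest, where $Q\in\Gamma(D)$ is orthogonal, I claim $Q'$ is primitive, i.e.\ the gcd of all its entries equals $1$. Indeed $(Q')^{T}Q'=\ell^{2}I$, so each column satisfies $\sum_i (Q'_{ij})^{2}=\ell^{2}$; if a prime $p$ divided every entry of $Q'$ then $p^{2}\mid\ell^{2}$, hence $p\mid\ell$, and then $\tfrac{\ell}{p}Q=\tfrac1p Q'$ would be integral with $\tfrac{\ell}{p}<\ell$ a positive integer, contradicting the minimality of $\ell$. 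Thus $\gcd_{ij}Q'_{ij}=1$. Since $x=\ell s$ we have $xQ=sQ'$, so the condition $xQ\equiv 0\pmod{k}$ reads $k\mid sQ'_{ij}$ for all $i,j$; taking the gcd over all entries yields $k\mid s\cdot\gcd_{ij}Q'_{ij}=s$. Therefore $\tfrac{x}{k}=\ell\cdot\tfrac{s}{k}$ is an integer multiple of $\ell$, which gives $\ell(Q)\mid\tfrac{x}{k}$ as required.

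The main obstacle is precisely this integrality step. The bare divisibility $\ell\mid x$ together with $xQ\equiv 0\pmod{k}$ is not by itself enough, since a priori $\tfrac{x}{k}$ need only be a rational scalar making $\tfrac{x}{k}Q$ integral, and such a scalar need be neither an integer nor a multiple of $\ell$. The primitivity of $\ell Q$ is exactly what upgrades ``$k$ divides the content of $xQ$'' to ``$k\mid s$'', forcing $\tfrac{x}{k}$ to be a bona fide integer divisible by $\ell$. I would therefore take care to justify primitivity (via minimality of the level, reinforced by the orthogonality relation $(Q')^{T}Q'=\ell^{2}I$ in the intended application) rather than treat the passage from ``$\tfrac{x}{k}Q$ integral'' to ``$\ell(Q)\mid\tfrac{x}{k}$'' as automatic.
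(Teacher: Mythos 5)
Your proof is correct, and it takes a genuinely different --- and more careful --- route than the paper's. The paper's own proof is two lines: since every entry of \(xQ\) is divisible by \(k\), the matrix \(\tfrac{x}{k}Q\) is integral, and then \(\ell(Q)\mid\tfrac{x}{k}\) is asserted directly ``by the definition of the level.'' This silently treats \(\tfrac{x}{k}\) as a positive integer, which is exactly the point you flag: minimality of \(\ell(Q)\) yields divisibility only for integer multipliers (via the division-with-remainder absorption argument you spell out), and the hypotheses do not force \(\tfrac{x}{k}\in\mathbb{Z}\). Your worry is vindicated: as literally stated, for an arbitrary rational matrix the lemma is false. Take \(Q=\tfrac{3}{2}I\), \(x=2\), \(k=3\): then \(xQ=3I\) is integral and \(\equiv 0 \pmod{3}\), yet \(\ell(Q)=2\) while \(\tfrac{x}{k}=\tfrac{2}{3}\), so the conclusion fails. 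Your repair --- showing \(\ell(Q)\,Q\) is primitive when \(Q\) is orthogonal via \((\ell Q)^T(\ell Q)=\ell^2 I\), then deducing \(k\mid s\) from the content of \(xQ=s\,(\ell Q)\) --- is sound, and it covers every invocation of the lemma in the paper, since it is only ever applied to orthogonal matrices (elements of \(\Gamma(D)\) in Lemma~\ref{lemma:rank}, and \(Q_1^T Q_2\) in Lemma~\ref{lemma:p-out}). By way of comparison: the paper's short argument becomes fully rigorous if one simply adds the hypothesis \(k\mid x\) to the statement, and that hypothesis holds automatically in both applications (in Lemma~\ref{lemma:rank}, \(x=d_n(W)\) and \(k=p\) with \(p\mid\ell(Q)\mid d_n(W)\); in Lemma~\ref{lemma:p-out}, \(x=d_n(W)^2\) and \(k=p^2\) with \(p\mid d_n(W)\)), so the paper's downstream results are unaffected. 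Your orthogonality/primitivity route buys the stronger, hypothesis-free version of the statement for the matrices that actually occur, at the modest cost of the primitivity argument; either way, some repair of the stated lemma is needed, and yours is a valid one.
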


\begin{proof}
Since \( xQ \equiv 0 \pmod{k} \), every entry of \( xQ \) is divisible by \( k \). Therefore, \( \frac{x}{k}Q \) is also an integral matrix. By the definition of the level \( \ell(Q) \), the smallest integer such that \( \ell(Q)Q \) is an integral matrix, it follows that \( \ell(Q) \) must divide \( \frac{x}{k} \). This completes the proof.
\end{proof}

\begin{lemma}
\label{lemma:q}
Let \( D \) be a controllable oriented graph, and \( C\) be an oriented graph that has the same generalized skew-spectrum as \( D \). The rational orthogonal matrix $Q$ satisfying equation (\ref{eq:q}) can be computed as:
\begin{equation}
    Q = W(D) W(C)^{-1}.
\end{equation}
\end{lemma}
\begin{proof}
Using the facts that $Q$ is orthogonal and $Qe=e$, from equation (\ref{eq:q}), we have \( Q^{\mathrm{T}} S(D)^k e = S(C)^k e \) where $k$ is a positive integer. As a result, the skew-walk matrix \( W(C) \) can be written as:
\begin{equation}
\label{eq:w}
W(C) = Q^{\mathrm{T}} W(D).
\end{equation}
Thus, the matrix \( Q \) can be computed as $Q = W(D) W(C)^{-1}$.
\end{proof}

\begin{lemma}
\label{lemma:level-dn}
Let \(D \in \mathcal{F}_n\), \(Q \in \Gamma(D)\), and \(C\) be the oriented graph satisfying \(S(C)=Q^{\mathrm{T}} S(D)\,Q\). Then \(\ell(Q)\) divides both \(d_n(W(D))\) and \(d_n(W(C))\). Moreover, \(C\) also lies in \(\mathcal{F}_n\).
\end{lemma}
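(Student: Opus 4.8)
The plan is to reduce both assertions to tools already in hand—the walk-matrix identity underlying Lemma~\ref{lemma:q} and the level divisibility of Lemma~\ref{lemma:dnq}—using only the elementary orthogonality of $Q$.

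First I would record the structural facts about $Q$. Since $D \in \mathcal{F}_n$ we have $\det W(D) \neq 0$, so $W(D)$ is nonsingular and $D$ is controllable. Because $Q \in \Gamma(D)$ is orthogonal with $Qe = e$, left-multiplying by $Q^T$ gives $Q^Te = e$. Working column by column,
\[
S(C)^k e = \bigl(Q^T S(D) Q\bigr)^k e = Q^T S(D)^k Q e = Q^T S(D)^k e,
\]
so assembling the columns yields the key identity $W(C) = Q^T W(D)$ (this is exactly the identity established inside the proof of Lemma~\ref{lemma:q}). Note also that $W(C)$ is integral, since $S(C)$ has entries in $\{-1,0,1\}$.

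For the divisibility statement I would apply Lemma~\ref{lemma:dnq} to the rational orthogonal matrix $Q^T$ with $X = W(D)$ and $Y = W(C)$: the relation $Q^T W(D) = W(C)$ between nonsingular integral matrices gives $\ell(Q^T)\mid \gcd\bigl(d_n(W(D)),\, d_n(W(C))\bigr)$. Since scaling a matrix to integrality is unaffected by transposition, $\ell(Q^T)=\ell(Q)$, and therefore $\ell(Q)$ divides both $d_n(W(D))$ and $d_n(W(C))$, as claimed.

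For the membership $C \in \mathcal{F}_n$, orthogonality gives $\det Q^T = \pm 1$, whence $\det W(C) = \det(Q^T)\det W(D) = \pm\det W(D)$. Thus $2^{-\lfloor n/2\rfloor}\det W(C) = \pm\,2^{-\lfloor n/2\rfloor}\det W(D)$; the right-hand side is (by hypothesis) an odd square-free integer, and changing sign preserves both oddness and square-freeness, so $C$ satisfies the defining condition of $\mathcal{F}_n$. I do not expect a genuine obstacle: the lemma is an assembly of Lemmas~\ref{lemma:q} and~\ref{lemma:dnq}, and the only points needing care are the three orthogonality consequences $Q^Te=e$, $\ell(Q)=\ell(Q^T)$, and $\det Q=\pm1$, together with invoking Lemma~\ref{lemma:dnq} with $Q^T$ (not $Q$) so that the walk matrices align correctly.
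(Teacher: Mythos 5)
Your proposal is correct and follows essentially the same route as the paper: both derive the identity $W(C)=Q^TW(D)$ from orthogonality and $Qe=e$, apply Lemma~\ref{lemma:dnq} to that relation for the divisibility claim, and use $\det Q^T=\pm 1$ to conclude $C\in\mathcal{F}_n$. If anything, you are slightly more careful than the paper, which tacitly identifies $\ell(Q)$ with $\ell(Q^T)$ and takes a detour through Lemma~\ref{lemma:snf-structure} to equate $d_n(W(D))=d_n(W(C))$, whereas your direct use of the gcd conclusion of Lemma~\ref{lemma:dnq} makes that step unnecessary.
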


\begin{proof}
From equation (\ref{eq:w}), we have
\[
    W(C) = Q^{\mathrm{T}} W(D).
\]
Since \(Q\) is orthogonal with \(\det{Q^{\mathrm{T}}}=\pm1\), it follows that
\[
    \det{W(C)} = \det{Q^{\mathrm{T}}}\det{W(D)} = \pm\det{W(D)}.
\]
Thus, the quantity \(2^{-\lfloor \frac{n}{2} \rfloor}\det{W(C)}\) remains odd and square-free, implying that \(C\) belongs to \(\mathcal{F}_n\). Furthermore, by Lemma~\ref{lemma:snf-structure}, we have:
\[
    d_n(W(D)) = d_n(W(C)).
\]
Finally, applying Lemma~\ref{lemma:dnq} to the relation \(W(C)=Q^{\mathrm{T}} W(D)\) shows that \(\ell(Q)\) divides \(d_n(W(D))\) (and hence \(d_n(W(C))\)). This completes the proof.
\end{proof}

For convenience, we abbreviate the notation by writing
\[
S := S(D) \quad \text{and} \quad W := W(D),
\]
denoting the skew-adjacency matrix and the skew-walk matrix of \( D \), respectively, throughout the rest of this section. Note that for any matrix \( Q \in \Gamma(D) \), $d_n(W) Q$ is an integral matrix.
\begin{lemma}
    \label{lemma:rank}
    Let \( D \in \mathcal{F}_n \) and \( Q \in \Gamma(D) \). Define \( \bar{Q} = d_n(W) Q \). Then for every odd prime \( p \) that divides \( \ell(Q) \),
    \begin{enumerate}[(i)]
        \item \( \bar{Q}^{\mathrm{T}} \bar{Q} \equiv 0 \pmod{p^2} \),
        \item \( \operatorname{rank}_p(\bar{Q}) = 1 \).
    \end{enumerate}
\end{lemma}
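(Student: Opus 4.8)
The plan is to prove the two claims separately, deriving both from the orthogonality of $Q$ together with the divisibility facts forced by $D \in \mathcal{F}_n$. The groundwork, which I would record first, is purely arithmetic: by Lemma~\ref{lemma:snf-structure} we have $d_n(W) = 2b$ with $b$ odd and square-free, and by Lemma~\ref{lemma:level-dn} the level satisfies $\ell(Q) \mid d_n(W)$. Hence an odd prime $p \mid \ell(Q)$ must in fact divide $b$, so that $p \mid d_n(W)$ while $p^2 \nmid d_n(W)$. Both parts hinge on exactly these two facts.

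Part (i) I expect to be immediate. Since $Q$ is orthogonal, $\bar{Q}^T \bar{Q} = d_n(W)^2\, Q^T Q = d_n(W)^2 I$. Because $p \mid d_n(W)$, every entry of $d_n(W)^2 I$ is divisible by $p^2$, which is precisely $\bar{Q}^T \bar{Q} \equiv 0 \pmod{p^2}$. No further input is needed here.

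Part (ii) is the substantive claim, and I would bound $\operatorname{rank}_p(\bar{Q})$ from above and below. For the upper bound, note $\det W = \pm\,2^{\lfloor n/2\rfloor} b$, so $p \mid \det W$ and $p^2 \nmid \det W$; the observation preceding Lemma~\ref{lemma:snf-structure} then gives the exact value $\operatorname{rank}_p(W) = n-1$. Next, from $W(C) = Q^T W$ in equation~\eqref{eq:w} and the integrality of $W(C)$, multiplying through by $d_n(W)$ yields $\bar{Q}^T W = d_n(W)\,W(C) \equiv 0 \pmod{p}$. Reducing modulo $p$, this says the column space of $W$ is contained in the kernel of $\bar{Q}^T$ over $\mathbb{F}_p$; since that column space has dimension $n-1$, the kernel has dimension at least $n-1$, forcing $\operatorname{rank}_p(\bar{Q}) = \operatorname{rank}_p(\bar{Q}^T) \le 1$. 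For the lower bound I would show $\bar{Q} \not\equiv 0 \pmod{p}$: if it were, then $\tfrac{1}{p}\bar{Q} = \tfrac{d_n(W)}{p}\,Q$ would be integral, and Lemma~\ref{lemma:level-mod} (with $x = d_n(W)$, $k = p$) would give $\ell(Q) \mid \tfrac{d_n(W)}{p}$; combined with $p \mid \ell(Q)$ this forces $p^2 \mid d_n(W)$, contradicting $p^2 \nmid d_n(W)$. Thus $\operatorname{rank}_p(\bar{Q}) \ge 1$, and together with the upper bound we conclude $\operatorname{rank}_p(\bar{Q}) = 1$.

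The crux is the exact equality $\operatorname{rank}_p(W) = n-1$, and this is where the square-free hypothesis $D \in \mathcal{F}_n$ does all the work: were the rank of $W$ modulo $p$ strictly less than $n-1$, the containment argument would only yield a weaker bound on $\operatorname{rank}_p(\bar{Q})$, and were $p^2 \mid d_n(W)$, the lower-bound argument would collapse. Everything else is bookkeeping with the orthogonality relation and the single key identity $\bar{Q}^T W = d_n(W)\,W(C)$.
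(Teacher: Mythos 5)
Your proposal is correct and follows essentially the same route as the paper's proof: part (i) from orthogonality of $Q$ plus $p \mid d_n(W)$, and part (ii) by combining $\operatorname{rank}_p(W) = n-1$ with the integrality of $W(C) = Q^T W$ to get the upper bound, and Lemma~\ref{lemma:level-mod} for the lower bound. The only differences are cosmetic (you use the transposed identity $\bar{Q}^T W \equiv 0 \pmod{p}$ where the paper uses $W^T \bar{Q} \equiv 0 \pmod{p}$, and you actually spell out the contradiction behind $\operatorname{rank}_p(\bar{Q}) > 0$ more explicitly than the paper does).
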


\begin{proof}
By Lemma~\ref{lemma:level-dn}, we have that $\ell(Q) \mid d_n(W)$, so for any odd prime $p$ dividing $\ell(Q)$, it follows that $p \mid d_n(W)$. Let $u_i$ and $u_j$ denote the $i^\text{th}$ and $j^\text{th}$ columns of $\bar{Q}$, where $\bar{Q} = d_n(W)Q$. Since $Q$ is orthogonal, we obtain 
$$
u_i^{\mathrm{T}} u_j = 
\begin{cases}
d_n(W)^2, & \text{if } i = j,\\[1mm]
0, & \text{if } i \neq j.
\end{cases}
$$
Since $p^2 \mid d_n(W)^2$, it follows immediately that each inner product satisfies $u_i^{\mathrm{T}} u_j \equiv 0 \pmod{p^2}$, so that $\bar{Q}^{\mathrm{T}} \bar{Q} \equiv 0 \pmod{p^2}$.

Next, since $p$ is odd, by the assumptions of Theorem~\ref{theorem:main} we have $p^2 \nmid \det{W}$ and hence $p^2 \nmid d_n(W)$, it follows that $\operatorname{rank}_p(W)= n-1$. Moreover, from equation (\ref{eq:w}) we deduce that $W^{\mathrm{T}}Q$ is an integral matrix, which implies $W^{\mathrm{T}}\bar{Q} \equiv 0 \pmod{p}$. Since the null space of $W^\mathrm{T}$ over $\mathbb{F}_p$ is one-dimensional, we conclude that $\operatorname{rank}_p(\bar{Q}) \le 1$.

On the other hand, because $p \mid \ell(Q)$, by Lemma~\ref{lemma:level-mod}, $\bar{Q} \pmod{p}$ must contain some nonzero entries, so that $\operatorname{rank}_p(\bar{Q}) > 0$. Combining these observations, we obtain $\operatorname{rank}_p(\bar{Q}) = 1$ which completes the proof.
\end{proof}

We now examine the relationships between the various matrices in $\Gamma(D)$. The following corollary follows directly from the steps outlined in Lemma~\ref{lemma:rank}.

\begin{corollary}
\label{corollary:linear-basis}

Let $D\in\mathcal{F}_n$ and \( Q_1, Q_2 \in \Gamma(D) \). If an odd prime \( p \) divides both \( \ell(Q_1) \) and \( \ell(Q_2) \), then the column spaces of \( d_n(W) Q_1 \) and \( d_n(W) Q_2 \) over \( \mathbb{F}_p \) coincide and are one-dimensional.
\end{corollary}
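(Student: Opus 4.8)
The plan is to observe that the one-dimensional column space produced in Lemma~\ref{lemma:rank} is governed entirely by $W$ and $p$, and is therefore identical for every matrix in $\Gamma(D)$ whose level is divisible by $p$. First I would note that, by Lemma~\ref{lemma:2-out}, both $\ell(Q_1)$ and $\ell(Q_2)$ are odd, so any common prime divisor $p$ is necessarily odd; this is precisely the hypothesis needed to invoke Lemma~\ref{lemma:rank} for each of $Q_1$ and $Q_2$ separately. Writing $\bar{Q}_i = d_n(W)\,Q_i$ for $i \in \{1,2\}$, Lemma~\ref{lemma:rank}(ii) then yields $\operatorname{rank}_p(\bar{Q}_1)=\operatorname{rank}_p(\bar{Q}_2)=1$.

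Next I would recall the mechanism behind that rank computation. In the proof of Lemma~\ref{lemma:rank}, equation~(\ref{eq:w}) forces $W^T \bar{Q}_i \equiv 0 \pmod{p}$, so that every column of $\bar{Q}_i$ lies in the null space of $W^T$ over $\mathbb{F}_p$. Because $p$ is odd we have $p^2 \nmid \det{W}$ (the defining condition for membership in $\mathcal{F}_n$), hence $\operatorname{rank}_p(W)=n-1$ and the space $\ker_p(W^T)$ is one-dimensional. The crucial point is that this null space is intrinsic to $W$ and $p$: it does not depend on which matrix $Q_i \in \Gamma(D)$ we started from.

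The remaining step is then purely dimensional. For each $i$, the column space of $\bar{Q}_i$ over $\mathbb{F}_p$ is a nonzero subspace (since the rank is $1$) of the one-dimensional space $\ker_p(W^T)$, and therefore coincides with $\ker_p(W^T)$. Consequently the column spaces of $\bar{Q}_1$ and $\bar{Q}_2$ are equal, and any single nonzero vector $w \in \ker_p(W^T)$ serves simultaneously as a basis for both — which is exactly the asserted common one-dimensional linear basis.

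I do not anticipate a genuine obstacle here, since the substantive content is already contained in Lemma~\ref{lemma:rank}; the corollary merely extracts the fact that the one-dimensional column space is $Q$-independent, being determined by $\ker_p(W^T)$ alone. The single point deserving care is the parity reduction via Lemma~\ref{lemma:2-out}, which guarantees that $p$ is odd so that Lemma~\ref{lemma:rank} — stated only for odd primes — can be applied directly; omitting this would leave a gap in citing that lemma.
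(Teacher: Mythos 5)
Your proposal is correct and follows essentially the same route as the paper, which states that the corollary ``follows directly from the steps outlined in Lemma~\ref{lemma:rank}'': both arguments observe that the columns of each $\bar{Q}_i = d_n(W)Q_i$ lie in the nullspace of $W^T$ over $\mathbb{F}_p$, a one-dimensional space determined only by $W$ and $p$, so the rank-one column spaces of $\bar{Q}_1$ and $\bar{Q}_2$ must coincide with it and hence with each other. Your explicit appeal to Lemma~\ref{lemma:2-out} to guarantee that the common prime $p$ is odd (so Lemma~\ref{lemma:rank} applies) is a point the paper leaves implicit, and making it explicit is a sound refinement rather than a deviation.
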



\begin{lemma}
\label{lemma:uv}
Let \( u \) and \( v \) be two \( n \)-dimensional integral column vectors, and let \( p \) be an odd prime. If the following conditions hold:
\begin{enumerate}[(i)]
    \item \( u \not\equiv 0 \pmod{p} \) and \( v \not\equiv 0 \pmod{p} \),
    \item \( u \) and \( v \) are linearly dependent over \( \mathbb{F}_p \),
    \item \( u^{\mathrm{T}} u \equiv v^{\mathrm{T}} v \equiv 0 \pmod{p^2} \),
\end{enumerate}
then \( u^{\mathrm{T}} v \equiv 0 \pmod{p^2} \).
\end{lemma}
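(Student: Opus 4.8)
The plan is to exploit the linear dependence over $\mathbb{F}_p$ to write $v$ as a scalar multiple of $u$ plus a correction divisible by $p$, and then track the relevant quadratic forms modulo $p^2$. First I would observe that hypotheses (i) and (ii) together force a clean relation: since $u$ and $v$ are both nonzero in $\mathbb{F}_p^n$ yet linearly dependent, there is a scalar $\lambda \in \mathbb{F}_p^{*}$ with $v \equiv \lambda u \pmod{p}$; indeed, a dependence $\alpha u + \beta v \equiv 0$ with $\beta \not\equiv 0$ gives $\lambda = -\beta^{-1}\alpha$, and $\lambda \neq 0$ precisely because $v \not\equiv 0$. Lifting $\lambda$ to an integer, this means $v = \lambda u + p\,w$ for some integral vector $w$. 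The nonvanishing hypothesis (i) is exactly what guarantees $\lambda \not\equiv 0$, which will be needed for a cancellation later.

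Next I would substitute this expression into $v^T v$ and expand:
\[
v^T v = \lambda^2\, u^T u + 2\lambda p\, (u^T w) + p^2\, (w^T w).
\]
Reducing modulo $p^2$ and invoking the hypotheses $v^T v \equiv 0$ and $u^T u \equiv 0 \pmod{p^2}$, the first and third terms vanish, leaving $2\lambda p\,(u^T w) \equiv 0 \pmod{p^2}$, i.e.\ $2\lambda\,(u^T w) \equiv 0 \pmod{p}$. This is the single place where the assumption that $p$ is \emph{odd} is essential: it ensures $2$ is invertible modulo $p$, and combined with $\lambda \not\equiv 0$ it lets us cancel both factors to conclude $u^T w \equiv 0 \pmod{p}$.

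Finally I would compute the target quantity directly from $v = \lambda u + p\,w$, namely $u^T v = \lambda\, u^T u + p\,(u^T w)$. The first term is divisible by $p^2$ because $u^T u \equiv 0 \pmod{p^2}$, and the second term is divisible by $p^2$ because $u^T w \equiv 0 \pmod{p}$ forces $p\,(u^T w) \equiv 0 \pmod{p^2}$. Hence $u^T v \equiv 0 \pmod{p^2}$, as required.

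The argument is short, so the only real obstacle is deploying the hypotheses at the right points rather than any hard computation: the oddness of $p$ is what permits dividing the intermediate congruence by $2$, and the fact that $\lambda$ is a unit (a consequence of (i)) is what permits cancelling it. I would also note in passing that the literal equality $u^T u = v^T v$ stated in (iii) is not actually used — only that both quadratic forms vanish modulo $p^2$ — which is consistent with the intended application, where $u$ and $v$ are columns of $\bar{Q}$ and each satisfies $u^T u = v^T v = d_n(W)^2 \equiv 0 \pmod{p^2}$.
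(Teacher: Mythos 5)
Your proof is correct and follows essentially the same route as the paper's: both exploit the $\mathbb{F}_p$-linear dependence (with hypothesis (i) guaranteeing the coefficients are units), expand an inner product modulo $p^2$, and use the oddness of $p$ to cancel the factor of $2$. The only cosmetic difference is that the paper squares the dependence relation $au+bv \equiv 0 \pmod{p}$ to get $2ab\,u^Tv \equiv 0 \pmod{p^2}$ in one step, whereas you solve $v = \lambda u + pw$ and argue in two steps via $u^Tw \equiv 0 \pmod{p}$; your version also dispenses with the paper's (unnecessary) separate case $u = \pm v$.
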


\begin{proof}
    If \( u = \pm v \), then \( u^{\mathrm{T}} v = \pm u^{\mathrm{T}} u \equiv 0 \pmod{p^2} \), so the result follows. Otherwise, since \( u \) and \( v \) are linearly dependent over \( \mathbb{F}_p \), there exist integers \( a \) and \( b \), not both zero in \( \mathbb{F}_p \), such that
    \begin{equation}
        \label{eq:uv1}
        au + bv \equiv 0 \pmod{p}.
    \end{equation}

    We first show that neither \( a \) nor \( b \) is congruent to zero modulo \( p \). Suppose, for example, that \( a \equiv 0 \pmod{p} \). Then, the equation becomes
    \[
    bv \equiv 0 \pmod{p}.
    \]
    Since \( b \not\equiv 0 \pmod{p} \), it follows that \( v \equiv 0 \pmod{p} \), contradicting our assumption about \( v \). A similar argument shows that \( b \not\equiv 0 \pmod{p} \). Hence,  neither \( a \) nor \( b \) is congruent to zero modulo \( p \).

    Now, taking the inner product of both sides of (\ref{eq:uv1}) with itself, we get
    \[
    (au + bv)^{\mathrm{T}} (au + bv) \equiv 0 \pmod{p^2}.
    \]
    Expanding this expression yields
    \[
    a^2 u^{\mathrm{T}} u + 2ab u^{\mathrm{T}} v + b^2 v^{\mathrm{T}} v \equiv 0 \pmod{p^2}.
    \]
    Since \( u^{\mathrm{T}} u \equiv v^{\mathrm{T}} v \equiv 0 \pmod{p^2} \), this simplifies to
    \begin{equation}
        \label{eq:uv2}
        2ab u^{\mathrm{T}} v \equiv 0 \pmod{p^2}.
    \end{equation}

    
    Given that \( 2ab \not\equiv 0 \pmod{p} \), equation (\ref{eq:uv2}) implies \( u^{\mathrm{T}} v \equiv 0 \pmod{p^2} \), completing the proof.
\end{proof}

\begin{lemma}
\label{lemma:p-out}
Let $D\in\mathcal{F}_n$ and let \( Q_1, Q_2 \in \Gamma(D) \). If \( p \) is a common factor of \( \ell(Q_1) \) and \( \ell(Q_2) \), then \( p \nmid \ell(Q_1^{\mathrm{T}} Q_2) \).
\end{lemma}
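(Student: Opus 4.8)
The plan is to reduce everything to a single congruence, namely $\bar{Q}_1^T\bar{Q}_2 \equiv 0 \pmod{p^2}$ where $\bar{Q}_i := d_n(W)\,Q_i$, and then read off the divisibility of $\ell(Q_1^T Q_2)$ from Lemma~\ref{lemma:level-mod}. First I would observe that $p$ is odd: since $p \mid \ell(Q_1)$ and $\ell(Q_1)$ is odd by Lemma~\ref{lemma:2-out}, the prime $p$ must be odd, so the hypotheses of Lemma~\ref{lemma:uv} will be available. From Lemma~\ref{lemma:rank} I record that, for $i=1,2$, $\bar{Q}_i^T\bar{Q}_i \equiv 0 \pmod{p^2}$ and $\operatorname{rank}_p(\bar{Q}_i)=1$, and from Corollary~\ref{corollary:linear-basis} that the columns of $\bar{Q}_1$ and $\bar{Q}_2$ all lie in one common one-dimensional subspace of $\mathbb{F}_p^n$, which I fix a generator $u$ for.

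The core step is to prove $\bar{Q}_1^T\bar{Q}_2 \equiv 0 \pmod{p^2}$ entrywise, i.e.\ to show $u_i^T v_j \equiv 0 \pmod{p^2}$ for every column $u_i$ of $\bar{Q}_1$ and $v_j$ of $\bar{Q}_2$. When both $u_i \not\equiv 0$ and $v_j \not\equiv 0 \pmod p$, all three hypotheses of Lemma~\ref{lemma:uv} hold: the nonvanishing is assumed, the linear dependence over $\mathbb{F}_p$ comes from the shared basis $u$ (Corollary~\ref{corollary:linear-basis}), and $u_i^T u_i \equiv v_j^T v_j \equiv 0 \pmod{p^2}$ are diagonal entries of $\bar{Q}_1^T\bar{Q}_1$ and $\bar{Q}_2^T\bar{Q}_2$. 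Lemma~\ref{lemma:uv} then delivers $u_i^T v_j \equiv 0 \pmod{p^2}$.

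The main obstacle is that $\operatorname{rank}_p(\bar{Q}_i)=1$ does \emph{not} guarantee that every column is nonzero modulo $p$, so Lemma~\ref{lemma:uv} cannot be applied verbatim to every pair. I would handle a vanishing column $u_i \equiv 0 \pmod p$ directly: write $u_i = p\,w_i$ with $w_i$ integral, choose $u$ to be a genuinely nonzero column of $\bar{Q}_1$, and use that $u_i^T u \equiv 0 \pmod{p^2}$ (an off-diagonal entry of $\bar{Q}_1^T\bar{Q}_1$, controlled by Lemma~\ref{lemma:rank}(i)) to conclude $w_i^T u \equiv 0 \pmod p$. Since $v_j \equiv \beta_j u \pmod p$ by Corollary~\ref{corollary:linear-basis}, this gives $u_i^T v_j = p\,w_i^T v_j \equiv p\,\beta_j\,w_i^T u \equiv 0 \pmod{p^2}$; the symmetric case $v_j \equiv 0$ and the case of two vanishing columns are immediate. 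This settles $\bar{Q}_1^T\bar{Q}_2 \equiv 0 \pmod{p^2}$ in all cases.

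Finally I would convert this into a level bound. Note $\bar{Q}_1^T\bar{Q}_2 = d_n(W)^2\,Q_1^T Q_2$ is integral, and that $p$ divides $d_n(W)$ exactly once: by Lemma~\ref{lemma:snf-structure} the last invariant factor is $d_n(W) = 2b$ with $b$ odd and square-free, so for the odd prime $p$ we have $p^2 \nmid d_n(W)$. Hence $d_n(W)^2/p^2$ is an integer coprime to $p$. Applying Lemma~\ref{lemma:level-mod} with $x = d_n(W)^2$ and $k = p^2$ to the rational orthogonal matrix $Q_1^T Q_2$ (which is rational orthogonal as a product of such) yields $\ell(Q_1^T Q_2) \mid d_n(W)^2/p^2$, and therefore $p \nmid \ell(Q_1^T Q_2)$, completing the proof.
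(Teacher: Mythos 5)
Your proposal is correct and follows essentially the same route as the paper's proof: reduce to showing $\bar{Q}_1^T\bar{Q}_2 \equiv 0 \pmod{p^2}$ via the same case analysis (Lemma~\ref{lemma:uv} for nonvanishing column pairs, a direct expansion against a nonzero reference column for the mixed case), then conclude with Lemma~\ref{lemma:level-mod} and $p^2 \nmid d_n(W)$. Your explicit observation that $p$ must be odd (via Lemma~\ref{lemma:2-out}), which the paper leaves implicit, is a small but welcome clarification rather than a different approach.
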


\begin{proof}
Let \(\bar{Q}_1 = d_n(W)Q_1\) and \(\bar{Q}_2 = d_n(W)Q_2\). Define \(O = \bar{Q}_1^{\mathrm{T}} \bar{Q}_2\), where each entry \(o_{ij}\) is given by \(o_{ij} = u_i^{\mathrm{T}} v_j\). Here, \(u_i\) and \(v_j\) denote the \(i^\text{th}\) and \(j^\text{th}\) columns of \(\bar{Q}_1\) and \(\bar{Q}_2\), respectively. Note that \(Q_1^{\mathrm{T}} Q_2\) is a rational matrix, while \(\bar{Q}_1\), \(\bar{Q}_2\), and \(O\) are integral. We begin with proving that \(O \equiv 0 \pmod{p^2}\).

By Corollary~\ref{corollary:linear-basis}, each column \( u_i \) and \( v_j \) is linearly dependent over \( \mathbb{F}_p \). Using Lemma~\ref{lemma:rank}, we know \( u_i^{\mathrm{T}} u_i \equiv 0 \pmod{p^2} \) and \( v_j^{\mathrm{T}} v_j \equiv 0 \pmod{p^2} \). We consider the following cases:

\noindent \textbf{Case 1:} If \( u_i \not\equiv 0 \pmod{p} \) and \( v_j \not\equiv 0 \pmod{p} \), then by Lemma~\ref{lemma:uv}, we have \( o_{ij} = u_i^{\mathrm{T}} v_j \equiv 0 \pmod{p^2} \).  \vspace{1.5mm}

\noindent \textbf{Case 2:} If \( u_i \equiv 0 \pmod{p} \) and \( v_j \equiv 0 \pmod{p} \), then clearly \( u_i^{\mathrm{T}} v_j \equiv 0 \pmod{p^2} \). \vspace{1.5mm}

\noindent \textbf{Case 3:} If only one of \( u_i \) or \( v_j \) is congruent to zero modulo \( p \). Let's assume without loss of generality that \( u_i \equiv 0 \pmod{p} \) while \( v_j \not\equiv 0 \pmod{p} \). As $\operatorname{rank}_p(\bar{Q}_1) > 0$, there must be a column \( u_k \) of \( \bar{Q}_1 \) such that \( u_k \not\equiv 0 \pmod{p} \).

We have that \( u_k \) and \( v_j \) are linearly dependent over \( \mathbb{F}_p \). Thus, we can express \( v_j \) as:
\[
    v_j = c u_k + p \beta,
\]
where \( c \) is a nonzero integer and \( \beta \) is an integral vector. Multiplying both sides by $u_i^{\mathrm{T}}$, we get:
\[
    u_i^{\mathrm{T}} v_j = c u_i^{\mathrm{T}} u_k + p u_i^{\mathrm{T}} \beta.
\]

By Lemma~\ref{lemma:rank}, \( c u_i^{\mathrm{T}} u_k \equiv 0 \pmod{p^2} \). Since \( u_i \equiv 0 \pmod{p} \), the term \( p u_i^{\mathrm{T}} \beta \) is also congruent to zero modulo \( p^2 \). Hence, \( o_{ij} = u_i^{\mathrm{T}} v_j \equiv 0 \pmod{p^2} \) in this case as well. The case where \( u_i \not\equiv 0 \pmod{p} \) and \( v_j \equiv 0 \pmod{p} \) can be proven similarly.


Thus, we conclude that \( O = d_n(W)^2\, Q_1^{\mathrm{T}} Q_2 \equiv 0 \pmod{p^2} \). As \( Q_1^{\mathrm{T}} Q_2 \) is a rational matrix, by Lemma~\ref{lemma:level-mod}, it follows that \( \ell(Q_1^{\mathrm{T}} Q_2) \mid \frac{d_n(W)^2}{p^2} \). Since \( p^2 \nmid d_n(W) \), we also have \( p \nmid \frac{d_n(W)^2}{p^2} \). Hence, \( p \nmid \ell(Q_1^{\mathrm{T}} Q_2) \), and the proof is complete.
\end{proof}

\begin{lemma}
\label{lemma:isomorphic}
Let $D\in\mathcal{F}_n$ and let $Q_1, Q_2 \in \Gamma(D)$. If $\ell(Q_1) = \ell(Q_2)$, then $Q_2 = Q_1 P$ where $P$ is a permutation matrix.
\end{lemma}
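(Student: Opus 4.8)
The plan is to reduce the whole statement to a single claim about levels: setting $P:=Q_1^TQ_2$, it suffices to prove $\ell(P)=1$. First I would collect the basic properties of $P$. Because $Q_1,Q_2$ are orthogonal with $Q_1e=Q_2e=e$, we have $Q_1^Te=e$ and therefore $Pe=Q_1^TQ_2e=Q_1^Te=e$; moreover $P$ is rational orthogonal as a product of such matrices. Writing $S(B)=Q_1^TS(D)Q_1$ and $S(C)=Q_2^TS(D)Q_2$, a one-line computation gives $P^TS(B)P=Q_2^TS(D)Q_2=S(C)$, so $P\in\Gamma(B)$; and by Lemma~\ref{lemma:level-dn} both $B$ and $C$ lie in $\mathcal{F}_n$ with $d_n(W(B))=d_n(W(C))=d_n(W)$. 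Granting $\ell(P)=1$, the matrix $P$ is integral and orthogonal with $Pe=e$, hence a permutation matrix, and then $Q_2=Q_1Q_1^TQ_2=Q_1P$ is exactly the desired conclusion.

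Next I would constrain $\ell(P)$. From $W(C)=Q_2^TW(D)=P^TW(B)$ I get $P\,W(C)=W(B)$, so Lemma~\ref{lemma:dnq} yields $\ell(P)\mid\gcd\!\big(d_n(W(B)),d_n(W(C))\big)=d_n(W)$. By Lemma~\ref{lemma:snf-structure} we have $d_n(W)=2b$ with $b$ odd and square-free, so $d_n(W)$ is square-free, while Lemma~\ref{lemma:2-out} applied to $P\in\Gamma(B)$ shows $\ell(P)$ is odd. Hence $\ell(P)$ is a product of distinct odd primes, each dividing $b$. Assume toward a contradiction that $\ell(P)>1$ and fix an odd prime $p\mid\ell(P)$; then $p\mid d_n(W)$ while $p^2\nmid d_n(W)$.

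The crux---and the step I expect to be the main obstacle---is to show that this $p$ divides \emph{both} $\ell(Q_1)$ and $\ell(Q_2)$, for then Lemma~\ref{lemma:p-out} immediately gives $p\nmid\ell(P)$, contradicting $p\mid\ell(P)$. This is where the hypothesis $\ell(Q_1)=\ell(Q_2)$ is indispensable. I would argue it contrapositively using the reductions $\bar{Q}_i:=d_n(W)Q_i$ and $\bar{P}:=d_n(W)P$, via the elementary fact that for a rational orthogonal $R$ with $\ell(R)\mid d_n(W)$ and our prime $p$, $p\nmid\ell(R)$ implies $\ell(R)\mid d_n(W)/p$ and hence $d_n(W)R\equiv 0\pmod p$. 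If $p\nmid\ell(Q_1)$ then, because $\ell(Q_1)=\ell(Q_2)$, also $p\nmid\ell(Q_2)$, so the fact above gives $\bar{Q}_1\equiv\bar{Q}_2\equiv 0\pmod p$ and therefore $\bar{Q}_1^T\bar{Q}_2\equiv 0\pmod{p^2}$. Because $\bar{Q}_1^T\bar{Q}_2=d_n(W)^2\,Q_1^TQ_2=d_n(W)\,\bar{P}$ and $p^2\nmid d_n(W)$, this forces $\bar{P}\equiv 0\pmod p$, i.e.\ $\ell(P)\mid d_n(W)/p$ by Lemma~\ref{lemma:level-mod}, so $p\nmid\ell(P)$---a contradiction. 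Thus $p\mid\ell(Q_1)$ and $p\mid\ell(Q_2)$, Lemma~\ref{lemma:p-out} closes the loop, and we conclude $\ell(P)=1$. The genuinely delicate points are keeping careful track of the single power of $p$ in the square-free $d_n(W)$ (so that $d_n(W)\bar{P}\equiv 0\pmod{p^2}$ really yields $\bar{P}\equiv 0\pmod p$) and ensuring the equal-level hypothesis is used to force divisibility of both levels simultaneously, since Lemma~\ref{lemma:p-out} requires $p$ to be a common factor of $\ell(Q_1)$ and $\ell(Q_2)$.
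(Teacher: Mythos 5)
Your proof is correct, and it shares the paper's overall skeleton: set $P=Q_1^TQ_2$, show $\ell(P)\mid d_n(W)$ (hence $\ell(P)$ square-free), get oddness from Lemma~\ref{lemma:2-out}, and eliminate odd prime factors via Lemma~\ref{lemma:p-out}. Where you genuinely diverge is in how the hypothesis $\ell(Q_1)=\ell(Q_2)$ enters. The paper uses a cheaper observation that you miss: since $\ell(Q_1)\ell(Q_2)\,Q_1^TQ_2$ is integral, $\ell(P)\mid\ell(Q_1)\ell(Q_2)=\ell(Q_1)^2$, and square-freeness of $\ell(P)$ then gives $\ell(P)\mid\ell(Q_1)$; so every odd prime factor of $\ell(P)$ is \emph{automatically} a common factor of $\ell(Q_1)$ and $\ell(Q_2)$, and Lemma~\ref{lemma:p-out} finishes in one stroke. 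You instead leave open whether a prime $p\mid\ell(P)$ divides $\ell(Q_1)$ and dispose of the case $p\nmid\ell(Q_1)$ (hence $p\nmid\ell(Q_2)$) by the direct computation $\bar Q_1\equiv\bar Q_2\equiv 0\pmod p$, so $d_n(W)\bar P=\bar Q_1^T\bar Q_2\equiv 0\pmod{p^2}$, which together with $p^2\nmid d_n(W)$ forces $\bar P\equiv 0\pmod p$ and thus $p\nmid\ell(P)$. That computation is sound — it is in effect one more instance of the column-congruence technique used inside the proof of Lemma~\ref{lemma:p-out} (the case where both reduced matrices vanish mod $p$) — but it re-proves by hand what the paper's divisibility trick obtains for free, at the cost of an extra case split. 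On the other hand, several supporting details in your write-up are more carefully justified than in the paper: you verify $Pe=e$ so that an integral orthogonal $P$ of level one is a genuine (not merely signed) permutation matrix, you correctly place $P$ in $\Gamma(B)$ (the paper says $\Gamma(C)$, a harmless mislabel), and you apply Lemma~\ref{lemma:dnq} directly to $P\,W(C)=W(B)$ rather than routing through Lemma~\ref{lemma:level-dn}. Both arguments are valid; the paper's is shorter, yours makes the role of the equal-level hypothesis fully explicit in each branch.
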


\begin{proof}
Let $B$ and $C$ be the oriented graphs with skew-adjacency matrices
\[
S(B)= Q_1^{\mathrm{T}}S(D)Q_1
\quad\text{and}\quad
S(C) = Q_2^{\mathrm{T}}S(D)Q_2.
\]
From Equation~\eqref{eq:sksh}, we can conclude that $Q_1^{\mathrm{T}} Q_2 \in \Gamma(B)$. Furthermore, by Lemma~\ref{lemma:level-dn}, both $B$ and $C$ belong to $\mathcal{F}_n$, therefore $\ell(Q_1^{\mathrm{T}} Q_2) \mid d_n(W(D))$.

As we obtain an integral matrix by multiplying \(\ell(Q_1)\ell(Q_2)\) with \(Q_1^{\mathrm{T}} Q_2\), \(\ell(Q_1^{\mathrm{T}} Q_2)\) must divide \(\ell(Q_1)\ell(Q_2)\). Moreover, since \(d_n(W(D))\) is square-free, \(\ell(Q_1^{\mathrm{T}} Q_2)\) is also square-free. Hence, if \(\ell(Q_1)=\ell(Q_2)\), then \(\ell(Q_1^{\mathrm{T}} Q_2)\mid\ell(Q_1)\). 

Additionally, by Lemma~\ref{lemma:2-out}, we know that $\ell(Q_1^{\mathrm{T}} Q_2)$ is odd. Now, if $p$ is any odd prime divisor of $\ell(Q_1)$, then by Lemma~\ref{lemma:p-out} we have \( p\nmid \ell(Q_1^{\mathrm{T}} Q_2) \). Thus, no odd prime divides $\ell(Q_1^{\mathrm{T}} Q_2)$, which forces \( \ell(Q_1^{\mathrm{T}} Q_2)=1 \).

This implies that $Q_1^{\mathrm{T}} Q_2$ is an integral orthogonal matrix with level one — that is, a permutation matrix $P$. Consequently, we obtain \( Q_2=Q_1P \) which completes the proof.
\end{proof}

With all the necessary tools in place, we now present the proof of Theorem \ref{theorem:main}.

\begin{proof}[Proof of Theorem \ref{theorem:main}]
Let \(B\) and \(C\) be two generalized cospectral mates of \(D\). Let \(Q_1, Q_2 \in \Gamma(D)\) such that
\[
S(B)= Q_1^{\mathrm{T}}S(D)Q_1
\quad\text{and}\quad
S(C) = Q_2^{\mathrm{T}}S(D)Q_2.
\]

By Lemma~\ref{lemma:isomorphic}, if \(\ell(Q_1)=\ell(Q_2)\) then \(Q_2=Q_1P\) for some permutation matrix \(P\); consequently, by Lemma~\ref{lemma:permutation}, the oriented graphs \(B\) and \(C\) are isomorphic. Thus, to obtain generalized cospectral mates, the corresponding matrices must have distinct levels.

From Lemma~\ref{lemma:2-out} and Lemma~\ref{lemma:level-dn}, every \(Q\in\Gamma(D)\) satisfies \(\ell(Q) \mid d_n(W(D))\) and \(\ell(Q)\) is odd. Therefore, the possible values of \(\ell(Q)\) are divisors of \(d_n(W(D))\) formed from the odd prime factors. Let \(k\) denote the number of distinct odd prime factors of \(\det{W(D)}\); then there are at most \(2^k\) distinct divisors.

Excluding the trivial case \(\ell(Q)=1\) (which corresponds to a permutation matrix yielding an isomorphic oriented graph), \(D\) can have at most \(2^k-1\) generalized cospectral mates. This completes the proof.
\end{proof}

Theorem~\ref{theorem:main} establishes an upper bound on the number of generalized cospectral mates that an oriented graph can possess. The following corollary then characterizes a special subclass of graphs that can be WDGSS.

\begin{corollary}
\label{corollary:wdgss}
Let \( D \) be an oriented graph of order \( n \). If \( D \) is not self-transpose and \( 2^{-\floor{\frac{n}{2}}} \det{W(D)} \) is an odd prime number, then \( D \) is WDGSS.
\end{corollary}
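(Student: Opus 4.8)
The plan is to obtain this as a direct specialization of Theorem~\ref{theorem:main} once the arithmetic is pinned down. First I would observe that the hypothesis $2^{-\lfloor \frac{n}{2} \rfloor}\det{W(D)} = p$ for an odd prime $p$ forces $\det{W(D)} = 2^{\lfloor \frac{n}{2} \rfloor} p \neq 0$. Hence $W(D)$ is nonsingular, so $D$ is controllable, and moreover $2^{-\lfloor \frac{n}{2} \rfloor}\det{W(D)} = p$ is an odd square-free integer, which places $D$ in $\mathcal{F}_n$ and makes Theorem~\ref{theorem:main} applicable.

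Next I would count the distinct odd prime factors of $\det{W(D)} = 2^{\lfloor \frac{n}{2} \rfloor} p$. Since $p$ is an odd prime, the only odd prime factor is $p$ itself, so $k = 1$. Theorem~\ref{theorem:main} then immediately yields that $D$ admits at most $2^{1} - 1 = 1$ generalized cospectral mate up to isomorphism.

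It remains to identify this single mate with $D^T$. Here I would use that the skew-adjacency matrix is skew-symmetric, so $S(D^T) = S(D)^T = -S(D)$; consequently $S(D^T)$ and $S(D)$ share the same eigenvalues, and likewise $J - S(D^T) = (J - S(D))^T$ shares the same eigenvalues as $J - S(D)$. Thus $D$ and $D^T$ always have the same generalized skew spectrum. Because $D$ is assumed not to be self-transpose, $D^T \not\cong D$, so $D^T$ is genuinely a generalized cospectral mate. Combining this existence with the upper bound of one mate forces the unique generalized cospectral mate to be exactly $D^T$, which is precisely the WDGSS property.

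The argument is essentially a bookkeeping application of the main theorem, so no deep new obstacle arises. The one point demanding care is confirming that $D^T$ is realized within the $\Gamma(D)$ framework underpinning Theorem~\ref{theorem:main}: since $D$ is controllable and $D, D^T$ are $\mathbb{R}$-cospectral, Theorem~\ref{theorem:q} guarantees a regular rational orthogonal matrix $Q$ with $Q^T S(D) Q = S(D^T)$ and $Qe = e$, and this $Q$ must have level strictly greater than $1$ precisely because $D^T \not\cong D$ (a level-one $Q$ would be a permutation, forcing $D^T \cong D$ by Lemma~\ref{lemma:permutation}). This is what ties the abstract count of admissible levels — here only $1$ and the single odd prime $p$ — to the concrete mate $D^T$.
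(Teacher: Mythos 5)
Your proof is correct and follows exactly the route the paper intends: the paper states Corollary~\ref{corollary:wdgss} without proof as an immediate specialization of Theorem~\ref{theorem:main}, and your argument (membership in $\mathcal{F}_n$, $k=1$ giving at most one mate, and $D^T$ always being a non-isomorphic mate when $D$ is not self-transpose) is precisely that specialization. The only note is that your final paragraph about realizing $D^T$ inside $\Gamma(D)$ is not needed, since Theorem~\ref{theorem:main} bounds all generalized cospectral mates directly; also, the statement's $W(G^{\sigma})$ is a typo for $W(D)$, which you correctly read through.
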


\section{Examples}
\label{section:examples}
In this section, we present examples of graphs satisfying the upper bound of Theorem~\ref{theorem:main}. We generated a dataset of all possible oriented graphs of order up to 7 and selected the following examples from it.

\begin{example}
Let $n=7$ and the skew-adjacency matrix of an oriented graph $D$ be as follows:

\begin{figure}[!t]
\centering
\begin{subfigure}[b]{0.49\linewidth}
\centering
\includegraphics[scale=1]{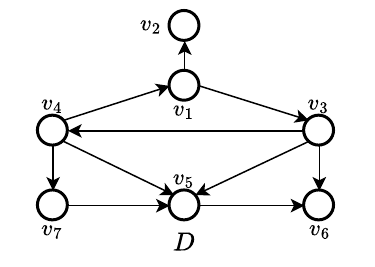}
\end{subfigure}
\begin{subfigure}[b]{0.49\linewidth}
\centering
\includegraphics[scale=1]{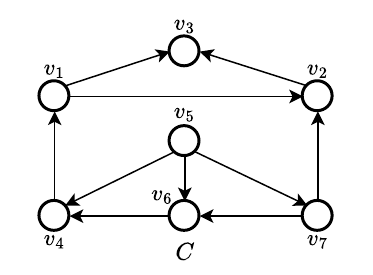}
\end{subfigure}
\caption{Two generalized cospectral mates.}
\label{fig:example_main_theorem}
\end{figure}

\begin{equation*}
S(D) = \begin{pmatrix}
  0 &  1 &  1 & -1 &  0 &  0 &  0 \\
 -1 &  0 &  0 &  0 &  0 &  0 &  0 \\
 -1 &  0 &  0 &  1 &  1 &  1 &  0 \\
  1 &  0 & -1 &  0 &  1 &  0 &  1 \\
  0 &  0 & -1 & -1 &  0 &  1 & -1 \\
  0 &  0 & -1 &  0 & -1 &  0 &  0 \\
  0 &  0 &  0 & -1 &  1 &  0 &  0
\end{pmatrix}
\end{equation*}

The determinant of the skew-walk matrix of $D$ is:
\[
\det{W(D)} = -14392 = (-1) \times 2^3 \times 7 \times 257.
\]
Hence, by Theorem~\ref{theorem:main}, $D$ can have at most three generalized cospectral mates, up to isomorphism. We find that there are exactly three generalized cospectral mates. We represent them by $D^{\mathrm{T}}$, $C$, and $C^{\mathrm{T}}$, where

\begin{equation*}
    S(C) =
    \begin{pmatrix}
    0 &  1 &  1 & -1 &  0 &  0 &  0 \\
    -1 &  0 &  1 &  0 &  0 &  0 & -1 \\
    -1 & -1 &  0 &  0 &  0 &  0 &  0 \\
    1 &  0 &  0 &  0 & -1 & -1 &  0 \\
    0 &  0 &  0 &  1 &  0 &  1 &  1 \\
    0 &  0 &  0 &  1 & -1 &  0 & -1 \\
    0 &  1 &  0 &  0 & -1 &  1 &  0
    \end{pmatrix}.
\end{equation*}

The two graphs $D$ and $C$ are shown in Figure \ref{fig:example_main_theorem}. Note that since $D$ and $C$ are not self-transpose, their transposes, $D^{\mathrm{T}}$ and $C^{\mathrm{T}}$ are also generalized cospectral mates of $D$. The skew-adjacency matrices of the transposes, $S(D^{\mathrm{T}})$ and $S(C^{\mathrm{T}})$, can be expressed as $S(D^{\mathrm{T}})=-S(D)$ and $S(C^{\mathrm{T}})=-S(C)$.

Let $Q_1, Q_2, Q_3 \in \Gamma(D)$ such that, $S(C) = Q_1^{\mathrm{T}} S(D) Q_1$, $S(D^{\mathrm{T}}) = Q_2^{\mathrm{T}} S(D) Q_2$ and $S(C^{\mathrm{T}}) = Q_3^{\mathrm{T}} S(D) Q_3$. The values of these matrices can be computed using Lemma~\ref{lemma:q}. We find that the levels are $\ell(Q_1)=7$, $\ell(Q_2)=7\times257=1799$ and $\ell(Q_3)=257$. These levels are distinct and correspond to the multiples of odd prime factors of $\det{W(D)}$ as predicted by Theorem \ref{theorem:main}. Hence, this is a tight example of upper bound proposed by Theorem~\ref{theorem:main}.

\end{example}

\begin{example}
Let $n=6$ and the skew-adjacency matrix of an oriented graph $D$ be as follows:


\begin{equation*}
S(D) = \begin{pmatrix}
  0 &  1 & -1 & -1 &  0 &  0 \\
 -1 &  0 &  0 &  0 &  0 &  0 \\
  1 &  0 &  0 & -1 & -1 &  0 \\
  1 &  0 &  1 &  0 & -1 & -1 \\
  0 &  0 &  1 &  1 &  0 &  0 \\
  0 &  0 &  0 &  1 &  0 &  0
\end{pmatrix}
\end{equation*}

The oriented graphs $D$ and $(D)^{\mathrm{T}}$ are shown in Figure \ref{fig:example_wdgss}. The determinant of the skew-walk matrix $W(D)$ turns out to be:
\[
\det{W(D)} = 1528 = 2^3 \times 191.
\]

\begin{figure}[!t]
\centering
\begin{subfigure}[b]{0.49\linewidth}
\hspace{-7.5mm}\centering
\includegraphics[scale=1]{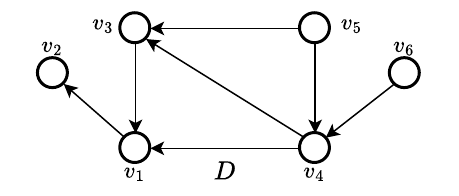}
\end{subfigure}
\begin{subfigure}[b]{0.49\linewidth}
\hspace{-3mm}\centering
\includegraphics[scale=1]{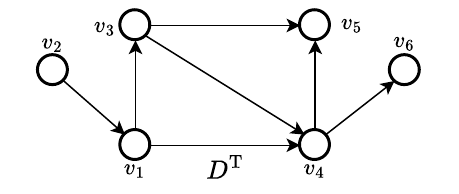}
\end{subfigure}
\caption{An example oriented graph that is WDGSS.}
\label{fig:example_wdgss}
\end{figure}
It can be checked that $D$ is not self-transpose. Hence, it follows the prediction by Corollary \ref{corollary:wdgss}, that $D$ is weakly determined by the generalized skew spectrum (WDGSS). Its only generalized cospectral mate, up to isomorphism, is its transpose $D^{\mathrm{T}}$. Moreover, let $Q \in \Gamma(D)$ such that $S((D)^{\mathrm{T}})= Q^{\mathrm{T}} S(D) Q$. We find that the level $\ell(Q)=191$, satisfying Lemma \ref{lemma:isomorphic}.
\end{example}



In the next section, we explore the potential of network control theory in addressing the spectral characterization of graphs problem. 

\section{Control-Theoretic Perspectives on Spectral Characterization of Graphs}
\label{section:controls}
As discussed earlier, recent methods addressing which graphs are determined by spectrum (DS) problem and its generalized version (DGS) have employed the walk matrix. Interestingly, the walk matrix yields an intriguing interpretation in the context of dynamical systems on graphs (networked dynamical systems), as we now explain. Consider the linear dynamical system defined on a graph:

\begin{equation}
\label{eq:dyn}
    \dot{x}(t) = Ax(t) + e u(t)
\end{equation}
where \( {x}(t) \in \mathbb{R}^n\) is the state vector of the nodes; $A$ is the adjacency matrix encoding the coupling between nodes, \( u(t) \) is the control input, and \( e \) is the input vector specifying which node(s) receive control signals. A fundamental question in control theory is whether the system is \emph{controllable}---that is, whether one can drive the system from any initial state to any desired final state in finite time using appropriate inputs $u(t)$. Controllability is determined by the \emph{controllability matrix}:
$$
\mathcal{C} = \left[e, \; Ae, \; A^2 e, \; \cdots, \; A^{n-1}e \right],
$$
and the system \eqref{eq:dyn} is controllable if and only if $\det{\mathcal{C}}\ne 0$. Notably, $\mathcal{C}$ is exactly the walk matrix of the graph. Thus, the walk matrix-based criteria discussed earlier can be reinterpreted through the lens of system controllability. In other words, controllable graphs that also satisfy certain additional constraints (e.g., on the determinant of the controllability matrix) are DGS. This perspective builds a bridge between spectral graph theory and control theory, suggesting a new approach to the spectral characterization problem. While \eqref{eq:dyn} is a canonical example, it represents just one possible dynamical system on a graph. A more general formulation is:

\begin{equation}
\label{eq:dyna_2}
\dot{x}(t) = \mathcal{M}(G)x(t) + \mathcal{H}(G)u(t),
\end{equation}
where \( \mathcal{M}(G) \) is a structure-dependent system matrix (not necessarily the adjacency matrix), and \( \mathcal{H}(G) \) defines the control input configuration. The controllability matrix of \eqref{eq:dyna_2} is:
$$
\mathcal{C} = \left[ \mathcal{H}(G), \; \mathcal{M}(G)\mathcal{H}(G), \; \dots \;, (\mathcal{M}(G))^{n-1}\mathcal{H}(G) \right].
$$

This generalization raises a natural question: \emph{Can a more general controllability framework arising from flexible choices $\mathcal{M}(G)$ and $\mathcal{H}(G)$ offer new insights into the spectral characterization of graphs problem?} By exploring alternative system matrices such as the Laplacian, normalized Laplacian, or signless Laplacian, and by varying input structures via $\mathcal{H}(G)$, we may uncover new families of graphs that are determined by their spectra. This, in turn, could lead to a richer classification of DGS graphs and deepen the connection between spectral properties and control-theoretic structure.

\section{Concluding remarks}
\label{sec:conclusion}

In this work, we analyzed the spectral properties of oriented graphs to determine the maximum number of generalized cospectral mates they can possess. The central result, Theorem~\ref{theorem:main}, establishes an arithmetic criterion based on the determinant of the skew-walk matrix, enabling the classification of oriented graphs by their generalized skew spectrum. This criterion provides precise bounds on the number of generalized cospectral mates of a family of oriented graphs. As a special case, we derived conditions for identifying whether a graph is WDGSS.

A promising direction for future work is to relax the constraints in Theorem~\ref{theorem:main} to encompass a broader class of controllable oriented graphs. Preliminary observations indicate that the square-free condition on $\det{W(D)}$ may be further weakened. Moreover, extending these techniques to undirected graphs is an intriguing direction. We conjecture that an analogous upper bound on the number of generalized cospectral mates can be established in the undirected setting.

\end{document}